\tikzset{->-/.style={decoration={markings, mark=at position .5 with {\arrow{>}}},postaction={decorate}}}
\tikzset{-<-/.style={decoration={markings, mark=at position .5 with {\arrow{<}}},postaction={decorate}}}
\tikzset{->-/.style={decoration={markings, mark=at position .5 with {\arrow{>}}},postaction={decorate}}}
\tikzset{-<-/.style={decoration={markings, mark=at position .5 with {\arrow{<}}},postaction={decorate}}}
\newcommand{\NB}[1]{\ensuremath{\vcenter{\hbox{#1}}}}
\theoremstyle{definition}
\newtheorem{thm}{Theorem}[section]
\newtheorem{cor}[thm]{Corollary}
\newtheorem{lem}[thm]{Lemma}
\newtheorem{prop}[thm]{Proposition}
\newtheorem{example}[thm]{Example}
\newtheorem*{thm*}{Theorem}
\numberwithin{equation}{section}
\def\o{\otimes}
\def\N{{\mathbb N}}
\def\R{{\mathbb R}}
\def\Z{{\mathbb Z}}
\newcommand{\End}{{\rm End}}
\def\M{\mathrm{M}}
\def\dif{\partial}
\def\lra{{\longrightarrow}}
           \def\gdim{{\mathrm{gdim}}}
\def\Id{\mathrm{Id}}
        \def\shuffle{\,\raise 1pt\hbox{$\scriptscriptstyle\cup{\mskip
               -4mu}\cup$}\,}
\newcommand{\refequal}[1]{\xy {\ar@{=}^{#1}
(-1,0)*{};(1,0)*{}};
\endxy}
\newcommand{\mH}{\mathrm{H}} 
\newcommand{\Ul}{{\mathsf U}}
\newcommand{\ul}{{\mathsf u}}
\newcommand{\mC}{\mathrm{C}}
\renewcommand{\bar}[1]{\overline{#1}}
\title{Remarks on some infinitesimal symmetries of Khovanov--Rozansky homologies in finite characteristic}
\author{You Qi, Louis-Hadrien Robert, Joshua Sussan and Emmanuel Wagner}
\date{\today}
\begin{document}

\maketitle

\begin{abstract}
We give a new proof of a theorem due to Shumakovitch and Wang on base point independence of Khovanov--Rozansky homology in characteristic $p$. Some further symmetries of $\mathfrak{gl}(p)$-homology in characteristic $p$ are also discussed.
\end{abstract}

\setcounter{tocdepth}{2} \tableofcontents

\section{Introduction}

In order to understand the structure of link homologies, various symmetries of these invariants have been studied.  Khovanov and Rozansky defined an action of a partial Witt algebra on triply-graded homology \cite{KRWitt}.  This action was studied in characteristic $p$ in order to categorify link polynomials at prime roots of unity \cite{QiSussanLink, QRSW1}.  Through the apparatus of foam evaluations \cite{RW1}, the authors constructed a partial Witt action on equivariant $\mathfrak{gl}(N)$-foams and extracted an $\mathfrak{sl}(2)$-action on equivariant $\mathfrak{gl}(N)$-link homology \cite{QRSW2, QRSW3} where the ground field does not have characteristic two.  These papers were partially motivated by a paper of Wang \cite{WangJ} extending earlier work of Shumakovitch \cite{Schum} which proved that reduced $\mathfrak{gl}(p)$-link homology in characteristic $p$ is independent of base point.

In \cite{QRSW3} we showed that if one works in characteristic $p$, then the $\mathfrak{sl}(2)$-action descends to the non-equivariant link homology obtained by killing the equivariant parameters.  The operators $e$ and $f$ then could be viewed as $p$-differentials in this finite characteristic case.
In this note we give a new proof of Shumakovitch and Wang's result
using $p$-DG machinery.  We study some of the structural consequences
of the $\mathfrak{sl}(2)$-action here.  In particular, we show that
the homology of a web has a unimodality property.  We also prove that a slice knot must contain the Steinberg representation of $\mathfrak{sl}(2)$ as a summand.  Finally, we show that as a consequence of a theorem of Wang \cite{WangJ2}, that $\mathfrak{sl}(2)$-actions could detect the splitness of a link.

The $\mathfrak{sl}(2)$-actions we consider here are also connected to \cite{EliasQislact, KhovNote} where such actions are constructed on categorified quantum groups and categories of Soergel bimodules. Cooper and Beliakova considered Steenrod operations on nilHecke algebras \cite{BeCoSteenrod} which have a flavor of the actions considered here.

The symmetries considered in this work are different from the ones in \cite{GHM} where the symmetries act in a homological direction.  
We also note that our $\mathfrak{sl}(2)$-actions on $\mathfrak{gl}(p)$-homology are different from the $\mathfrak{sl}(N)$-actions on annular
$\mathfrak{gl}(N)$-homology considered in \cite{GLW, QRS}.  Those actions also do not require working over a field of finite characteristic.

\paragraph{Acknowledgements.}
Y.~Q. is partially supported by the Simons Foundation and the National Science Foundation (DMS-2401376).  J.~S.{} is
partially supported by the NSF grant DMS-2401375, a Simons Foundation Travel Support Grant and PSC CUNY Enhanced
Award 66685-00 54. E.~W.{} is partially supported by the ANR projects AlMaRe
(ANR-19-CE40-0001-01), AHA (JCJC ANR-18-CE40-0001) and CHARMES
(ANR-19-CE40-0017). L-H.~R.{} was not supported by any project related
funding.

\section{A new proof of Shumakovitch--Wang's Theorem}

\subsection{A \texorpdfstring{$p$}{p}-DG  Frobenius algebra} \label{sec-DG-Frob} In this section we collect together some background material we need.

Let $\Bbbk$ be a field of characteristic $p>0$. Unadorned tensor product ``$\otimes$'' will be understood as taken over $\Bbbk$.

Consider the graded Frobenius algebra $A=\Bbbk[x]/(x^p)$ where the degree of $x$ is two. The comultiplication $\Delta: A \lra A\otimes A $ is an $(A,A)$-bimodule homomorphism determined by
\begin{equation}
    \Delta(1)=\sum_{i=0}^{p-1} x^i \otimes x^{p-1-i}, 
\end{equation}
The counit $\epsilon: A\lra \Bbbk$ is given by
\begin{equation}
    \epsilon(x^i) =
    \begin{cases}
        1& i= p-1, \\
        0 & i \neq p-1. \\
    \end{cases}  
\end{equation}

Let us equip $A$ with $p$-differential of degree $-2$. This $p$-differential on the Khovanov chain complex of a link was first utilized by Shumakovitch \cite{Schum} in the characteristic 2 case. A characteristic $p$ analogue was more recently considered by \cite{WangJ} to prove analogous results of \cite{Schum} in characteristic $p>2$. For this reason, we will call this differential as the \emph{Shumakovitch--Wang differential}, and denote it by $\dif_-$.  This is defined by $\dif_{-}: A\lra A$:
\begin{equation}
    \dif_- (x^i)=-i x^{i-1}, \quad i=0,\dots, p-1.
\end{equation}
In other words, $\dif_-$ has the effect of the differential operator $-\frac{\partial}{\partial x}$ on the truncated polynomial algebra $A$.

For any non-negative integer $n\in \N$, identify $A^{\otimes n}\cong \Bbbk[x_1,\dots, x_n]/(x_1^p,\dots, x_n^p)$. The differential $\dif_-$ extends to a differential on $A^{\otimes n}$ according to the Leibniz rule. In other words, the $\dif_-$-action is given by the differential operator $\dif_-=-(\frac{\dif}{\dif x_1}+\cdots + \frac{\dif}{\dif x_n})$.

We refer the reader to \cite{QiSussan2} for a survey of standard results about $p$-DG algebras. For this paragraph only, let $(A^\prime,\dif_A)$ be an arbitrary $p$-DG algebra, and $H$ be the Hopf algebra $\Bbbk[\dif]/(\dif^p)$. Set $B:=A^\prime\# H$ to be the \emph{smash product algebra}, which is isomorphic, as a vector space, to $A^\prime\otimes H$, subject to the relations that $A^\prime\cong A^\prime\o 1\subset A^\prime\o H$, $H\cong  1\o H \subset A^\prime\o H$ as subalgebras, and the rule for commuting elements
\begin{equation}\label{eqn-smash}
(1\o \dif) \cdot (a\o 1 )=(a\o 1) \cdot(1\o \dif) + \dif_A(a)\o 1,
\end{equation}
where $a$ is an arbitrary element of $A^\prime$. Clearly, equation \eqref{eqn-smash} is homogeneous. Thus $B$ is a graded algebra, and it is an easy exercise to see that the category of graded $B$-modules coincides with the category of $p$-DG modules over $(A^\prime,\dif_A)$.

\begin{lem}
The Frobenius algebra structure on $A$ is compatible with the differential action by $\dif_-$. Consequently, $(A,\dif_-)$ is a contractible $p$-DG algebra.
\end{lem}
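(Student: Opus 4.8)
\emph{Overall plan.} I would unpack the lemma into two independent pieces: compatibility of the Frobenius structure with $\dif_-$, and contractibility. Since $A=\Bbbk[x]/(x^p)$ is concentrated in even degrees, "compatible" means three things: that $\dif_-$ is a $p$-nilpotent derivation of the algebra $A$; and that the unit $\Bbbk\to A$, the counit $\epsilon$, and the comultiplication $\Delta$ are morphisms of $p$-complexes, where $\Bbbk$ carries the zero differential and $A\o A$ carries the tensor-product $p$-differential $\dif_-\o 1+1\o\dif_-$ (i.e.\ the Leibniz rule as in \eqref{eqn-smash}; concretely the operator $-(\partial_{x_1}+\partial_{x_2})$ on $\Bbbk[x_1,x_2]/(x_1^p,x_2^p)$). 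The derivation property is immediate from $\dif_-=-\partial/\partial x$, and $\dif_-^p=0$ on $A$ because $(\partial/\partial x)^p$ annihilates every $x^i$ with $i\le p-1$; compatibility with the multiplication and the unit is then automatic since $\dif_-(1)=0$. So the only real content concerns $\epsilon$ and $\Delta$.

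For $\epsilon$ I would observe that the image of $\dif_-$ is spanned by $x^0,\dots,x^{p-2}$, all of degree $<2(p-1)$, while $\epsilon$ is supported in degree $2(p-1)$; hence $\epsilon\circ\dif_-=0$, which is exactly what is needed. For $\Delta$, using that $\Delta$ is an $(A,A)$-bimodule map and $\dif_-$ is a derivation for the bimodule structure, it suffices to check $(\dif_-\o 1+1\o\dif_-)\bigl(\Delta(1)\bigr)=0$. Differentiating $\Delta(1)=\sum_{i=0}^{p-1}x^i\o x^{p-1-i}$ produces two sums which, after reindexing one of them by $j=i-1$, contribute to each basis element $x^j\o x^{p-2-j}$ the coefficient $-(j+1)-(p-1-j)=-p=0$ in $\Bbbk$. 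This short telescoping identity is the one place where a sign or indexing slip is easy, so I would write it out explicitly.

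\emph{Contractibility.} Here I would argue directly, by realizing the underlying $p$-complex of $(A,\dif_-)$ as a free module of rank one over $H=\Bbbk[\dif_-]/(\dif_-^p)$. Take the socle generator $v=x^{p-1}$ as cyclic vector; an easy induction gives $\dif_-^{k}(v)=c_k\,x^{p-1-k}$ with $c_k=(-1)^k(p-1)(p-2)\cdots(p-k)\equiv k!\pmod p$, which is a unit in $\Bbbk$ for $0\le k\le p-1$. Hence $v,\dif_-(v),\dots,\dif_-^{p-1}(v)$ is a $\Bbbk$-basis of $A$, so $A\iso H$ as $H$-modules. Since a finite-dimensional $p$-complex is contractible precisely when it is free over $H$, the underlying $p$-complex of $A$ is contractible, and therefore $(A,\dif_-)$ is a contractible $p$-DG algebra in the sense recalled in \cite{QiSussan2}. (This is the characteristic-$p$ shadow of the acyclicity of the truncated de Rham complex $(\Bbbk[x]/(x^p),\partial_x)$, once one remembers its $H$-module structure.)

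\emph{Main obstacle.} Honestly, none of the steps is a serious obstacle: everything takes place inside $A=\Bbbk[x]/(x^p)$. The two points worth watching are the bookkeeping in the $\Delta$-telescoping, where the cancellation is exactly the vanishing of $p$ in $\Bbbk$, and being explicit about the meaning of "contractible $p$-DG algebra" so that the reduction to freeness over $H$ is legitimate --- both of which I would handle by appealing to the standard $p$-DG formalism of \cite{QiSussan2}.
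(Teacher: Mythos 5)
Your proposal is correct and the core of it --- reducing compatibility to the unit, counit and coproduct, and verifying $(\dif_-\otimes 1+1\otimes\dif_-)(\Delta(1))=0$ by the reindexing/telescoping computation in which the coefficient of each $x^j\otimes x^{p-2-j}$ is $-p\equiv 0$ --- is exactly the paper's argument. The only difference is that you explicitly justify contractibility by exhibiting $A$ as a free rank-one module over $H=\Bbbk[\dif_-]/(\dif_-^p)$ generated by the socle element $x^{p-1}$ (using $\dif_-^k(x^{p-1})=k!\,x^{p-1-k}$), a detail the paper leaves implicit in the word ``consequently'' and only develops later via the matrix-algebra isomorphism of Lemma~\ref{lem-2-by-2-matrix}; your direct check is a welcome addition rather than a deviation.
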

\begin{proof}
It suffices to show that the Frobenius structural maps $\Delta$ and $\epsilon$ commute with the differential actions. For instance, we have, on one hand,
\[
\Delta(\dif_-(1)) =\Delta(0)=0,
\]
and on the other hand,
\begin{align*}
 \dif_-(\Delta(1))& =
\dif_- \left(\sum_{i=0}^{p-1}  x^{i}\otimes x^{p-1-i}\right)=
 -\sum_{i=0}^{p-1} i x^{i-1}\otimes x^{p-1-i} - \sum_{i=0}^{p-1} (p-1-i) x^{i}\otimes x^{p-2-i} \\
 & = -\sum_{i=0}^{p-2} (i+1) x^{i}\otimes x^{p-i-2} -
 \sum_{i=0}^{p-2} (p-1-i) x^{i}\otimes x^{p-1-i}  = 0.
\end{align*}
The rest of the compatibility checks are similar.
\end{proof}

In characteristic $2$, a simple computation shows that there is an isomorphism
\begin{equation}
    \phi:B \cong \mathrm{M}(2,\Bbbk),
\end{equation}
given by 
    \[
\phi(1\otimes \dif_-)= \left( 
        \begin{matrix}
        0 & 0 \\
        1 & 0
        \end{matrix}
        \right) , \quad \quad
        \phi(x\otimes 1)=
        \left( 
        \begin{matrix}
        0 & 1 \\
        0 & 0
        \end{matrix}
        \right), 
        \quad \quad
        \phi(1\otimes 1)=
        \left( 
        \begin{matrix}
        1 & 0 \\
        0 & 1
        \end{matrix}
        \right) ,
        \quad \quad
        \phi(x\otimes \dif_-)=
        \left( 
        \begin{matrix}
        1 & 0 \\
        0 & 0
        \end{matrix}
        \right).
    \]
This is true more generally.

\begin{lem}\label{lem-2-by-2-matrix}
  For the $p$-DG algebra $(A,\dif_{-})$ over a field $\Bbbk$ of positive characteristic $p$, there is an isomorphism of graded algebras
    \begin{equation}
    \phi: B \lra \mathrm{M}(p,\Bbbk).
    \end{equation} 
Here the grading on the matrix algebra is given by the principal grading 
    \begin{equation*}
        \mathrm{deg}(E_{i,j})=2(j-i)
    \end{equation*}
\end{lem}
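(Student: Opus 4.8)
The plan is to identify $\mathrm{M}(p,\Bbbk)$ with $\mathrm{End}_\Bbbk(A)$ by means of the tautological module. Recall from the discussion above that $A=\Bbbk[x]/(x^p)$, regarded as a $p$-DG module over $(A,\dif_-)$, is the same thing as a graded $B$-module; as a graded vector space it is $p$-dimensional, with basis $1,x,\dots,x^{p-1}$ in degrees $0,2,\dots,2(p-1)$. I would show that this module is absolutely simple, deduce from the Jacobson density theorem together with a dimension count that the structure map $\rho\colon B\to \mathrm{End}_\Bbbk(A)$ is an isomorphism, and then match up the two gradings.

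First, since $A\subseteq B$ is a subalgebra and the generator $\dif$ of $H=\Bbbk[\dif]/(\dif^p)\subseteq B$ acts on the module $A$ as $\dif_-$, a $B$-submodule of $A$ is precisely a $\dif_-$-stable ideal of $A$. The ideals of $A$ are the $(x^k)$ for $0\le k\le p$; for $1\le k\le p-1$ we have $\dif_-(x^k)=-kx^{k-1}$, a nonzero multiple of $x^{k-1}\notin(x^k)$ because $k\not\equiv 0\pmod p$, so $(x^k)$ is not $\dif_-$-stable. Hence $A$ is a simple $B$-module. Next, any $f\in\mathrm{End}_B(A)$ is in particular $A$-linear, hence multiplication by $a_0:=f(1)\in A$; commuting with the $\dif$-action and using the Leibniz rule forces $\dif_-(a_0)\,a=0$ for all $a\in A$, so $\dif_-(a_0)=0$, i.e. $a_0\in\ker\dif_-=\Bbbk$. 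Therefore $\mathrm{End}_B(A)=\Bbbk$ and $A$ is absolutely simple.

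By Jacobson density and finite-dimensionality of $A$, the map $\rho\colon B\to\mathrm{End}_\Bbbk(A)$ is surjective; since $B\cong A\otimes H$ as vector spaces, $\dim_\Bbbk B=p^2=\dim_\Bbbk\mathrm{End}_\Bbbk(A)$, and hence $\rho$ is an isomorphism of algebras. (Alternatively, injectivity of $\rho$ can be checked by hand: the monomials $x^i\dif^j$, $0\le i,j\le p-1$, act on $A$ by a family of operators that is triangular with respect to the monomial basis, so their images are linearly independent.) It remains to track the grading. The $B$-action on $A$ is degree-preserving — $x$, of degree $2$, acts by multiplication by $x$, and $\dif$, of degree $-2$, acts by $\dif_-$ — so $\rho$ is a morphism of graded algebras, where $\mathrm{End}_\Bbbk(A)$ carries the grading induced by that of $A$. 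Finally, ordering the basis of $A$ as $(x^{p-1},x^{p-2},\dots,x,1)$ identifies $\mathrm{End}_\Bbbk(A)$ with $\mathrm{M}(p,\Bbbk)$ in such a way that the matrix unit $E_{i,j}$, which sends $x^{p-j}\mapsto x^{p-i}$, is homogeneous of degree $2(p-i)-2(p-j)=2(j-i)$; this is exactly the principal grading, and this $\rho$ is the desired isomorphism $\phi$ (one checks it recovers the displayed formulas when $p=2$).

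I do not expect a serious obstacle. The conceptual content is just the observation that the tautological $p$-DG module $A$ over $(A,\dif_-)$ is absolutely simple of dimension $p$; the one place requiring care is the grading bookkeeping in the last step, where the order-reversal of the basis is needed to get the sign convention of the principal grading to come out correctly.
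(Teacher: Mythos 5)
Your proof is correct and takes essentially the same route as the paper: both realize $\phi$ as the action of $B$ on the tautological $p$-dimensional module and conclude by the dimension count $\dim B = p^2 = \dim \End_\Bbbk(V)$, the only difference being that the paper verifies injectivity directly from the linear independence of the operators $x^i\dif^j$ acting on $V$ (exactly the argument you offer parenthetically), whereas your primary route establishes simplicity and $\End_B(A)=\Bbbk$ and then invokes Jacobson density for surjectivity. Your explicit grading bookkeeping --- reversing the basis order so that $E_{i,j}$ has degree $2(j-i)$, consistent with the displayed $p=2$ formulas --- supplies a detail the paper leaves implicit.
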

\begin{proof}
To avoid confusing notation, let $V$ be the $p$-DG module $A\cdot v_0$ where $\dif(v_0)=0$. Thus it is naturally a $B$-module so that we have a homomorphism $\phi: B\lra \End_\Bbbk(V)\cong \M(p,\Bbbk)$. It suffices to show that $\phi$ is an isomorphism, and that the gradings match on both sides. 

To show that $\phi$ is an isomorphism, it suffices to show that it is injective, since both sides have dimension $p^2$. This is clear since $B$ has a basis consisting of $\{x^i\partial^j|0\leq i,j\leq p-1\}$, and this basis acts on $V$ linearly independently.

\end{proof}

\begin{cor}\label{cor-acyclicity}
Any $p$-DG module over $(A,\dif_-)$ is isomorphic to a direct sum of degree shifted copies of the column module over $\mathrm{M}(p,\Bbbk)$. Consequently, any $p$-DG module over $(A,\dif_{-})$ is acyclic.
\end{cor}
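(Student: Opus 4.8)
The plan is to deduce both statements from the structure theory of graded modules over the matrix algebra. By the paragraph preceding Lemma~\ref{lem-2-by-2-matrix}, a $p$-DG module over $(A,\dif_-)$ is exactly a graded module over the smash product $B=A\#H$, and by Lemma~\ref{lem-2-by-2-matrix} there is an isomorphism of graded algebras $B\cong\M(p,\Bbbk)$, the target carrying the principal grading $\deg(E_{i,j})=2(j-i)$. Thus it suffices to show: (i) every graded $\M(p,\Bbbk)$-module is a direct sum of degree shifts of the column module $V$; and (ii) $V$ is acyclic as a $p$-complex.

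For (i) I would use that $e:=E_{1,1}$ is a \emph{homogeneous} idempotent of degree $0$. One checks immediately that $eBe=\Bbbk e$ is concentrated in degree $0$ and that the left ideal $Be=\bigoplus_{i=1}^{p}\Bbbk\,E_{i,1}$ is, as a graded $B$-module, a degree shift of the column module $V$ appearing in the proof of Lemma~\ref{lem-2-by-2-matrix}. For any graded $B$-module $M$ the canonical map
\[
Be\otimes_{eBe}eM\longrightarrow M,\qquad be\otimes em\longmapsto (be)(em),
\]
is an isomorphism of graded $B$-modules: it is the grading-preserving (because $\deg e=0$) incarnation of the Morita equivalence attached to $e$, whose fullness $BeB=B$ holds because $\M(p,\Bbbk)$ is simple. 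Since $eM$ is just a graded $\Bbbk$-vector space, it is a direct sum of degree-shifted copies of $\Bbbk$, and applying $Be\otimes_{eBe}(-)$ then presents $M$ as a direct sum of degree shifts of $Be$, hence of $V$. This is assertion (i).

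For (ii), recall that a $p$-complex is acyclic precisely when it is free as a module over $H=\Bbbk[\dif]/(\dif^p)$. With $V=A\cdot v_0$ and $\dif(v_0)=0$ as in the proof of Lemma~\ref{lem-2-by-2-matrix}, the differential acts by $\dif_-(x^{i}v_0)=-i\,x^{i-1}v_0$; as the scalars $-1,-2,\dots,-(p-1)$ are invertible in $\Bbbk$, the element $x^{p-1}v_0$ generates $V$ freely over $H$, that is, $\dif_-$ is a single regular nilpotent operator on $V$. Hence $V$ is acyclic, and by (i) so is every $p$-DG module over $(A,\dif_-)$; this is also in keeping with the contractibility of $(A,\dif_-)$ proved in the first lemma of the section.

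The only delicate point is keeping the grading under control in step (i), so that the classical decomposition of an $\M(p,\Bbbk)$-module into copies of its simple module refines, in the principally graded setting, to a decomposition into \emph{degree shifts of the one} column module $V$. Once one observes that the idempotent $E_{1,1}$ is homogeneous, this is entirely routine.
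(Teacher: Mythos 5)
Your proof is correct and follows essentially the same route as the paper: identify $p$-DG modules over $(A,\dif_-)$ with graded modules over $B\cong\M(p,\Bbbk)$ and invoke the graded Wedderburn/Morita decomposition into degree-shifted column modules, your idempotent argument with $E_{1,1}$ being exactly that decomposition written out in detail. Your verification that the column module is free over $H$ (via the regular nilpotent action of $\dif_-$ on $x^{p-1}v_0$) supplies the detail behind the paper's one-line assertion that the graded column module is a contractible $p$-complex.
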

\begin{proof}
By the earlier discussion, $p$-DG modules over $(A,\dif_{-})$ are equivalent to graded modules over $\mathrm{M}(p,\Bbbk)$, and thus the result is a special case of the classical Wedderburn Theorem.

The second statement follows since any graded column module over $\mathrm{M}(p,\Bbbk)$ is a contractible $p$-complex.
\end{proof}

In what follows, we will study the rank-one free module $A$ as an $(A,A)$-bimodule. In particular, we will need to know how the bimodule structure interacts with the differential.

\begin{lem}\label{lem-tensor-smash}
     For the tensor product $p$-DG algebra $(A\otimes A,\dif_-)$,  its  smash product algebra $(A\otimes A)\# H$ is isomorphic to
    $A\otimes \mathrm{M}(p,\Bbbk)$.
    More generally, $A^{\otimes n}\# H$ is isomorphic to the tensor product algebra $A^{\otimes (n-1)}\otimes \mathrm{M}(p,\Bbbk)$.
\end{lem}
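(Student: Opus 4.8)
The plan is to reduce, by a linear change of the polynomial generators, to the already-settled case $n=1$ of Lemma~\ref{lem-2-by-2-matrix}, and then to peel off the tensor factors on which $\dif_-$ acts trivially.

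First I would introduce new generators of $A^{\otimes n}\cong \Bbbk[x_1,\dots,x_n]/(x_1^p,\dots,x_n^p)$ by setting $z_1:=x_1$ and $z_i:=x_i-x_1$ for $2\le i\le n$. Since $t\mapsto t^p$ is additive in characteristic $p$, we get $z_i^p=(x_i-x_1)^p=x_i^p-x_1^p=0$, so the assignment on generators defines an algebra map $\Bbbk[z_1,\dots,z_n]/(z_1^p,\dots,z_n^p)\to A^{\otimes n}$; it is surjective (as $x_1=z_1$ and $x_i=z_i+z_1$) and both sides have dimension $p^n$, hence it is an isomorphism. It preserves the internal grading, since each $z_i$ is homogeneous of degree $2$. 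Because $\dif_-$ is a derivation with $\dif_-(x_i)=-1$ for all $i$, it satisfies $\dif_-(z_1)=-1$ and $\dif_-(z_i)=0$ for $i\ge 2$; as two derivations that agree on algebra generators coincide, $\dif_-$ is carried to $-\partial/\partial z_1$. In other words, as $p$-DG algebras,
\[
(A^{\otimes n},\dif_-)\;\cong\;(A,\dif_-)\otimes (A^{\otimes(n-1)},0),
\]
where the first tensor factor is $\Bbbk[z_1]/(z_1^p)$ with its Shumakovitch--Wang differential and the second is $\Bbbk[z_2,\dots,z_n]/(z_2^p,\dots,z_n^p)$ equipped with the zero differential.

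Next I would record the general principle that forming the smash product with $H=\Bbbk[\dif]/(\dif^p)$ commutes with tensoring by a $p$-DG algebra carrying the zero differential: if $(A',\dif_{A'})$ is any $p$-DG algebra and $A''$ is an algebra viewed as a $p$-DG algebra with zero differential, then $(A'\otimes A'')\#H\cong (A'\#H)\otimes A''$. This is immediate from the presentation~\eqref{eqn-smash}: in $(A'\#H)\otimes A''$ the subalgebra $A''$ is central, the subalgebras $A'$ and $H$ satisfy the defining commutation rule, and the straightening relation $\dif\cdot(a\otimes b)=(a\otimes b)\dif+(\dif_{A'}(a)\otimes b)$ holds in both algebras, so the evident linear bijection $A'\otimes A''\otimes H\to A'\otimes H\otimes A''$ is a (grading-preserving) algebra isomorphism.

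Combining the two steps and invoking Lemma~\ref{lem-2-by-2-matrix} for the case $n=1$,
\[
A^{\otimes n}\#H\;\cong\;\big((A,\dif_-)\otimes (A^{\otimes(n-1)},0)\big)\#H\;\cong\;\big(A\#H\big)\otimes A^{\otimes(n-1)}\;\cong\;\mathrm{M}(p,\Bbbk)\otimes A^{\otimes(n-1)},
\]
with all isomorphisms respecting the relevant gradings (the principal grading on $\mathrm{M}(p,\Bbbk)$ tensored with the internal grading on $A^{\otimes(n-1)}$); taking $n=2$ gives the first assertion. The only genuinely nontrivial point is the choice of coordinates $z_i=x_i-x_1$, which simultaneously renders all but one generator $\dif_-$-closed while preserving the truncation relations $z_i^p=0$; everything afterward is formal manipulation of smash products.
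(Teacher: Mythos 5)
Your proof is correct and follows essentially the same route as the paper: the paper likewise introduces the $\dif_-$-closed difference variables ($y_i = x_1 - x_i$, differing from your $z_i$ only by a sign), notes they remain nilpotent of order $p$, splits off the factor carrying the nontrivial differential, and reduces to Lemma~\ref{lem-2-by-2-matrix}. Your write-up merely makes explicit the formal steps (the change-of-variables isomorphism and the compatibility of $\#H$ with zero-differential tensor factors) that the paper leaves implicit.
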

\begin{proof}
    Identify $A\otimes A =\Bbbk[x_1,x_2]/(x_1^p,x_2^p)$. Then the $\dif_-$-action annihilates $1\otimes 1$ and $y:=x_1- x_2$. Thus we have
    \[
    (A\otimes A)\# H \cong \dfrac{\Bbbk[y]}{(y^p)} \otimes \left(\dfrac{\Bbbk[ x_1 ]}{( x_1^p)}\# H\right).
    \]
    More generally, identify $A^{\otimes n}$ with $\Bbbk[x_1,\dots, x_n]/(x_1^p,\dots, x_n^p)$. The new variables $y_i:=x_1-x_i$, $i=2,\dots, n$, are annihilated by the $\dif_-$-action and become zero when raised to $p$th power. Thus we have
    \[
    A^{\otimes n}\# H \cong \dfrac{\Bbbk[y_2,\dots, y_{n}]}{(y_2^p,\dots, y_{n}^p)} \otimes \left(\dfrac{\Bbbk[ x_1 ]}{( x_1^p)}\# H\right).
    \]
    The result follows.
  \end{proof}

\subsection{A proof of Shumakovitch--Wang's Theorem} 

Let $D$ be a link diagram. We form its $\mathfrak{gl}(p)$-Khovanov--Rozansky chain complex via
foam evaluation \cite{RW1}. Fix a base point $b\in D$.  The bigraded Khovanov--Rozansky chain complex $C_{*,*}(D)$ carries an action of the algebra $A=\Bbbk[x_b]/(x_b^p)$ which is locally given by merging a small unknot near $b$. Furthermore, the chain complex also has an action of $\dif_-$ \cite{Schum, WangJ, QRSW2, QRSW3} that commutes with the
topological differential $d_T$. In other words, $C_{*,*}(D)$ is a chain complex of $p$-DG modules over $A$.

Recall that $B=A\#H$ is a graded matrix algebra over $\Bbbk$ (Lemma \ref{lem-2-by-2-matrix}). Graded chain complexes of $B$-modules decompose, up to $q$-degree and homological degree shifts, into direct sums of modules of the form
\begin{subequations}
\begin{equation}\label{eqn-indecomp-mod-over-B-1}
    0\lra V \lra 0, 
\end{equation}
\begin{equation}\label{eqn-indecomp-mod-over-B-2}
     0\lra V\stackrel{=}{\lra} V \lra 0,
\end{equation}
\end{subequations}
where $V$ is a simple (column) module over $B$.

Next, we have an isomorphism
\begin{equation}
  \psi:  A\otimes_B V \stackrel{\cong}{\lra}\Bbbk, 
\end{equation}
This isomorphism is part of the usual graded Morita equivalence between a matrix algebra and the ground field. For simplicity, we will usually suppress this isomorphism $\psi$, and write $x\otimes_B 1$ instead for $\psi(x\otimes 1)$, and call $A\otimes_B V$ the \emph{Morita reduction}. 

We set, for the chosen base point $b$, the  chain complex obtained by reducing each term in a fixed topological degree by Morita reduction
 \[
    \bar{C}_{*,*}(D):=C(D)_{*,*}\otimes_B V,
    \]
    equipped with the induced topological differential $\bar{d}_T:=d_T\otimes_B \Id_V$.
When no confusion can arise, we abbreviate $C_{*,*}(D)$
 and $\bar{C}_{*,*}(D)$ simply as $C(D)$ and $\bar{C}(D)$. Their total homology groups are denoted $\mH(D)$ and $\bar{\mH}(D)$ respectively.

 \begin{lem}
    The Morita reduced chain complex $(\bar{C}_{*,*}(D),\bar{d}_T)$ is isomorphic to the reduced Khovanov--Rozansky chain complex of $D$.
\end{lem}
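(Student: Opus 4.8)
The plan is to exhibit an explicit natural chain isomorphism between $\bar{C}(D)$ and the reduced complex, bypassing homology. Recall first that, up to an overall grading shift, the reduced $\mathfrak{gl}(p)$ Khovanov--Rozansky complex of $D$ may be taken to be the subcomplex $x_b^{p-1}C(D)\subseteq C(D)$, equivalently the quotient $C(D)/x_bC(D)=C(D)\otimes_A\Bbbk$; both carry the differential induced by $d_T$, which is legitimate since $d_T$ commutes with the $A$-action. By hypothesis each $C_i(D)$ is a $p$-DG module over $(A,\dif_-)$, so Corollary \ref{cor-acyclicity} writes it as a direct sum of grading-shifted copies of the simple module $V$; in particular each $C_i(D)$ is \emph{free} over $A$, hence $C(D)\otimes_A(-)$ is exact and $C(D)\otimes_A\mathrm{soc}(A)\cong x_b^{p-1}C(D)$ as complexes.

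The key step uses that, by the proof of Lemma \ref{lem-2-by-2-matrix}, the restriction $V|_A$ is free of rank one, so the socle $\mathrm{soc}(A)=\Bbbk\cdot x_b^{p-1}$ embeds as an $A$-submodule of $V|_A$. Applying the exact functor $C(D)\otimes_A(-)$ to this embedding and then composing with the canonical surjection attached to $A\subseteq B$ produces a morphism of complexes
\[
\mu\colon\; x_b^{p-1}C(D)\;\cong\;C(D)\otimes_A\mathrm{soc}(A)\;\hookrightarrow\;C(D)\otimes_A V\;\twoheadrightarrow\;C(D)\otimes_B V\;=\;\bar{C}(D),
\]
which is manifestly natural in $D$ and, since each of its constituents is functorial in the appropriate module variable, intertwines $d_T|_{x_b^{p-1}C(D)}$ with $\bar{d}_T$.

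It remains to check that $\mu$ is an isomorphism. It respects the direct sum decomposition of Corollary \ref{cor-acyclicity}, so it suffices to verify this after restricting to a single summand $V\{s\}$ of some $C_i(D)$, where both source and target are one-dimensional. Unwinding, the image under $\mu$ of a generator of $V\{s\}\otimes_A\mathrm{soc}(A)$ is the class of $x_b^{p-1}v_0$ in $V\otimes_B V$, which is one-dimensional by the Morita equivalence (Lemma \ref{lem-2-by-2-matrix}, compatibly with $\psi$); this class is nonzero because the kernel of the projection $V\to V\otimes_B V$ is $\mathrm{im}(\dif_-)=\mathrm{span}_\Bbbk\{v_0,x_bv_0,\dots,x_b^{p-2}v_0\}$ --- a one-line computation --- and this subspace does not contain $x_b^{p-1}v_0$. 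Hence $\mu$ is summandwise, and therefore globally, an isomorphism of complexes, proving the lemma.

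The main difficulty I foresee is bookkeeping rather than mathematics: fixing conventions so that the grading shifts and the left/right $B$-module structures are consistent (so that ``$V\otimes_B V\cong\Bbbk$'' and the description of $\ker(V\to V\otimes_B V)$ hold on the nose), and so that the chosen model $x_b^{p-1}C(D)$ matches whatever normalization of the reduced complex is in force. A variant that avoids the socle picture is to first identify $\bar{C}(D)$ with $C(D)/\mathrm{im}(\dif_-)$ using $V\cong B/B\dif$ as left $B$-modules, and then to write down the degree-zero chain map $C(D)/x_bC(D)\to C(D)/\mathrm{im}(\dif_-)$ induced by the operator $-x_b^{p-1}\dif_-^{p-1}x_b^{p-1}$ --- a polynomial in $x_b$ and $\dif_-$, hence automatically commuting with $d_T$ --- and to check by a short calculation (where $(p-1)!=-1$ in $\Bbbk$ enters) that it is a levelwise isomorphism.
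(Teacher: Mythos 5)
Your proof is correct in substance, but it takes a noticeably different route from the paper's. The paper's argument is a two-line Morita computation: it observes that $V\cong B\otimes_A\Bbbk$ (the simple $B$-module is induced from the one-dimensional $A$-module on which $x_b$ acts by zero), whence $(\mbox{-})\otimes_B V\cong(\mbox{-})\otimes_B B\otimes_A\Bbbk\cong(\mbox{-})\otimes_A\Bbbk$ by associativity of tensor products, applied termwise to $C(D)$. You instead start from the socle model $x_b^{p-1}C(D)$ of the reduced complex, build an explicit chain map through $C(D)\otimes_A\mathrm{soc}(A)\hookrightarrow C(D)\otimes_A V\twoheadrightarrow C(D)\otimes_B V$, and verify it is an isomorphism summand by summand using the decomposition of Corollary \ref{cor-acyclicity}; your identification of $\ker(V\otimes_A V\to V\otimes_B V)$ with $\mathrm{im}(\dif_-)$ and the observation that $x_b^{p-1}v_0$ survives are correct. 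What the paper's approach buys is brevity and independence from the freeness/decomposition of $C(D)$ (it is a pure isomorphism of functors); what yours buys is an explicit formula for the isomorphism, at the cost of the left/right $B$-module bookkeeping you rightly flag (converting the left $H$-action to a right one via the antipode so that $V\otimes_BV\cong\Bbbk$ holds on the nose) --- a point the paper also elides when it writes $A\otimes_BV$. Your closing variant via $V\cong B/B\dif$ and the operator $x_b^{p-1}\dif_-^{p-1}x_b^{p-1}$ is essentially the same computation packaged differently and also works, with Wilson's theorem supplying the nonzero scalar.
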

\begin{proof}
    Regarding $A$ as a subalgebra of $B=A\# H$, we have
    \[V\cong B\otimes_A \Bbbk\]
    where $\Bbbk$ is the simple module over $A$ (the base point variable $x_b$ acts by 0). Thus, we have an isomorphism of functors
   \[ (\mbox{-})\otimes_BV \cong  (\mbox{-})\otimes_B \otimes B\otimes_A \Bbbk\cong  (\mbox{-})\otimes_A \Bbbk.\]
   This functor is termwise applied to $C(D)$ in each homological degree.
\end{proof}

\begin{prop}\label{prop-unreduced-reduced}
    The chain complex $C(D)$ is a bigraded free module over $A$: 
    \[
    C(D)\cong \bar{C}(D)\otimes V.
    \]
    Furthermore, the $\mathfrak{gl}(p)$-Khovanov--Rozansky homology of $D$ is a free $A$-module
       \[
    \mH(D)\cong \bar{\mH}(D)\otimes V.
    \]
\end{prop}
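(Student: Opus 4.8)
The plan is to derive both isomorphisms from the graded Morita equivalence between $B=A\#H$ and the ground field $\Bbbk$ --- available because $B\cong\mathrm{M}(p,\Bbbk)$ by Lemma~\ref{lem-2-by-2-matrix} --- applied to the chain complex $C(D)$. Recall that $C(D)$ is a chain complex of $p$-DG modules over $A$, equivalently a chain complex of graded $B$-modules, with $d_T$ a morphism of graded $B$-modules in each homological degree. The Morita reduction $\mathrm{Red}:=(\mbox{-})\otimes_B V$ and the functor $(\mbox{-})\otimes V$ are mutually quasi-inverse equivalences between graded $B$-modules and graded $\Bbbk$-vector spaces, where $V$ is the column module carrying the principal grading of Lemma~\ref{lem-2-by-2-matrix}. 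In particular there is an isomorphism $\mu_M\colon\mathrm{Red}(M)\otimes V\stackrel{\cong}{\lra}M$, natural in the graded $B$-module $M$.

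Now apply this equivalence termwise to $C(D)$. Since $\mathrm{Red}(C(D)_i)=\bar{C}(D)_i$ and $\mathrm{Red}(d_T)=\bar{d}_T$ by definition, naturality of $\mu$ together with the $B$-linearity of $d_T$ assembles the $\mu_{C(D)_i}$ into an isomorphism of chain complexes $\bar{C}(D)\otimes V\stackrel{\cong}{\lra}C(D)$ carrying $\bar{d}_T\otimes\Id_V$ to $d_T$; this is the claimed isomorphism $C(D)\cong\bar{C}(D)\otimes V$. Restricting the $B$-action along $A\hookrightarrow B$, the column module $V$ is free of rank one over $A$: by construction it is the cyclic module $A\cdot v_0$, and $\dim_\Bbbk V=\dim_\Bbbk A=p$ forces the annihilator of $v_0$ to vanish. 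Hence $\bar{C}(D)\otimes V\cong\bar{C}(D)\otimes A$ is a bigraded free $A$-module, and therefore so is $C(D)$. (This freeness also follows directly from Corollary~\ref{cor-acyclicity}, which presents each $C(D)_i$ as a direct sum of grading shifts of $V$.)

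For the homology statement, $V$ is finite dimensional over the field $\Bbbk$, hence flat, so $(\mbox{-})\otimes V$ commutes with passing to homology. Applying this to the isomorphism of complexes above yields $\mH(D)\cong\bar{\mH}(D)\otimes V$, and the right-hand side is free over $A$ since $V\cong A$ as an $A$-module.

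I do not anticipate a genuine obstacle: the substance is already contained in Lemma~\ref{lem-2-by-2-matrix} and Corollary~\ref{cor-acyclicity}. The only point demanding care is that the Morita equivalence respect the internal grading and --- more importantly --- assemble compatibly with the topological differential, i.e.\ that the termwise isomorphisms be natural enough to commute with $d_T$. This is exactly where one uses that $d_T$ is a map of $p$-DG $A$-modules, equivalently of graded $B$-modules, rather than merely of $A$-modules.
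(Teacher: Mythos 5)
Your proposal is correct and rests on the same key input as the paper's proof, namely the identification $B\cong\mathrm{M}(p,\Bbbk)$ from Lemma~\ref{lem-2-by-2-matrix} and the resulting graded Morita equivalence. The only cosmetic difference is in packaging: the paper decomposes the complex of $B$-modules explicitly into one-term summands \eqref{eqn-indecomp-mod-over-B-1} and contractible two-term summands \eqref{eqn-indecomp-mod-over-B-2} and tracks each through the reduction, whereas you argue functorially and deduce the homology statement from exactness of $(\mbox{-})\otimes V$; both arguments are sound and essentially equivalent.
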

\begin{proof}
The first statement follows from \eqref{eqn-indecomp-mod-over-B-1} and \eqref{eqn-indecomp-mod-over-B-2}.
     The second part follows from the fact that taking homology of $C(D)$ precisely eliminates contractible summands of the form \eqref{eqn-indecomp-mod-over-B-2}, which is also eliminated in $\bar{\mH}(D)$:
    \[
    \left(  0\lra V\stackrel{=}{\lra} V \lra 0 \right) \otimes_BV \cong\left(
  0\lra \Bbbk\stackrel{=}{\lra}\Bbbk \lra 0 \right).
    \]
    The simple summands that contribute to homology reduce to
    \[
\left(  0\lra V \lra 0 \right) \otimes_BV \cong\left(
  0\lra \Bbbk \lra 0 \right).
    \]
    The result follows.
\end{proof}

For the trivial unlink diagram with two components, the $\mathfrak{gl}(p)$-Khovanov--Rozansky chain complex assigned to it is $A^{\otimes 2}\cong \Bbbk[x_1,x_2]/(x_1^p,x_2^p)$. In characteristic $p$, setting $y=x_1-x_2$ on which $\dif_-$ acts by zero, we have the $p$-DG module isomorphism
\[\dfrac{\Bbbk[x_1,x_2]}{(x_1^p,x_2^p)}\cong \dfrac{\Bbbk[x_1,y]}{(x_1^p,y^p)} \cong \dfrac{\Bbbk[y,x_2]}{(y^p,x_2^p)}.\]
Thus reduction at either $x_1$ or $x_2$ produces the $p$-dimensional complex $\Bbbk[y]/(y^p)$, which is independent of the chosen base point. This theorem, due to Shumakovitch \cite{Schum} and Wang \cite{WangJ}, is true more generally, which we give a simplified proof of now.

\begin{thm}
Let $D$ be the diagram of a knot in $\R^3$. The reduced complex of $\bar{C}(D)$ is independent of choices of base points.
\end{thm}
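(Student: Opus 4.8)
The plan is to reduce the statement to the case of two base points lying in the same region of the diagram, and then to move the base point around the knot one crossing at a time, exploiting the locality of both the base-point action of $A$ and the Shumakovitch--Wang differential $\dif_-$. First I would fix two base points $b$ and $b'$ on $D$. Since $D$ is the diagram of a knot (a single component), any two points on the diagram can be connected by a path along the knot that crosses finitely many crossings; it therefore suffices to treat the case where $b$ and $b'$ are separated by a single crossing (or, even more simply, lie on the same edge of $D$).

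When $b$ and $b'$ lie on the same edge, the two base-point actions of $A$ on $C(D)$ — one via $x_b$, one via $x_{b'}$ — actually coincide up to a chain homotopy, and in fact the full $p$-DG module structures over $A$ agree. The key point is that in the foam-evaluation construction \cite{RW1, QRSW2, QRSW3}, sliding a decoration along an edge is an isomorphism of complexes intertwining the $\dif_-$-action, since $\dif_-$ is itself defined by a sum of local operators $-\partial/\partial x_i$ over the edge variables. Thus $C(D)$, together with its $B = A\#H$-module structure, is independent of which of $b, b'$ on a fixed edge we choose, and hence so is the Morita reduction $\bar{C}(D) = C(D)\otimes_B V$ together with $\bar{d}_T$.

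For base points $b, b'$ on adjacent edges separated by a crossing $c$, I would use the model computation for the two-component unlink done just above the theorem statement: near the crossing $c$, the relevant local piece of $C(D)$ looks like $A\otimes A \cong \Bbbk[x_b, x_{b'}]/(x_b^p, x_{b'}^p)$, and in characteristic $p$ the change of variables $y = x_b - x_{b'}$ (on which $\dif_-$ acts by zero) exhibits a $p$-DG module isomorphism $\Bbbk[x_b, y]/(x_b^p, y^p) \cong \Bbbk[y, x_{b'}]/(y^p, x_{b'}^p)$, swapping the roles of the two base-point actions. This is precisely the content of Lemma \ref{lem-tensor-smash}: locally, $A^{\otimes 2}\#H \cong \Bbbk[y]/(y^p)\otimes \mathrm{M}(p,\Bbbk)$, a matrix algebra over $\Bbbk[y]/(y^p)$ in which the two simple modules coming from reduction at $x_b$ and at $x_{b'}$ are isomorphic. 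Morita-reducing at either base point therefore yields the same complex up to isomorphism.

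The main obstacle I anticipate is the bookkeeping of \emph{globalizing} these local isomorphisms: one must check that sliding the base point past a crossing, while a priori only a local statement about a tensor factor, really extends to an isomorphism of the whole complex $C(D)$ as a complex of $p$-DG modules, compatibly with $\bar d_T$. This follows because the crossing-change-of-variables isomorphism is the identity away from the two edges involved and commutes with all the other local generators of $C(D)$ (it only touches the two variables $x_b, x_{b'}$), but making this precise requires invoking the functoriality of the foam-evaluation TQFT and the fact established in Corollary \ref{cor-acyclicity} and Proposition \ref{prop-unreduced-reduced} that $C(D)$ is free over $A$, so that tensoring down over $B$ commutes with the relevant operations. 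Once these compatibilities are in place, composing the single-crossing isomorphisms along a path from $b$ to $b'$ gives the desired isomorphism $\bar C_b(D)\cong \bar C_{b'}(D)$, completing the proof.
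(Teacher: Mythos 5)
Your argument rests on the same key computation as the paper's --- the change of variables $y=x_b-x_{b'}$ killed by $\dif_-$, together with Lemma \ref{lem-tensor-smash} giving $A^{\otimes 2}\#H\cong \Bbbk[y]/(y^p)\otimes \mathrm{M}(p,\Bbbk)$ --- but you package it as a crossing-by-crossing induction, moving the base point along the knot and worrying about globalizing local isomorphisms. The paper's proof shows this entire layer is unnecessary: for \emph{any} two base points $b_1,b_2$, adjacent or not, the two dot-actions commute and make the whole complex $C(D)$ a module over $A^{\otimes 2}\#H$ at once, and then
\[
\bar C_i(D)\;=\;C(D)\otimes_{B_i}\tfrac{\Bbbk[x_i]}{(x_i^p)}\;\cong\;C(D)\otimes_{A_y\otimes B_i}\tfrac{\Bbbk[y,x_i]}{(y^p,x_i^p)}\;\cong\;C(D)\otimes_{A^{\otimes 2}\#H}\tfrac{\Bbbk[x_1,x_2]}{(x_1^p,x_2^p)}
\]
for $i=1,2$, so both reductions are literally the same object; compatibility with $\bar d_T$ is just functoriality of the termwise reduction. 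This global formulation sidesteps exactly the "globalization" obstacle you flag at the end. One imprecision in your write-up worth correcting: the assertion that "near the crossing $c$, the relevant local piece of $C(D)$ looks like $A\otimes A$" is not right as a statement about the local structure of the cube of resolutions (near a crossing the complex is a cone on a zip/unzip map); what is true, and what you actually use, is that the two base-point variables generate a global $A\otimes A$-action on $C(D)$, independent of where the base points sit. Also, the freeness statements from Corollary \ref{cor-acyclicity} and Proposition \ref{prop-unreduced-reduced} are not needed here: $B$ is a matrix algebra, so every graded $B$-module is automatically projective and the Morita reduction is exact without further input. With these adjustments your plan does yield a correct proof, but the stepwise movement of the base point buys nothing over the one-shot global argument.
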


\begin{proof}
Let $x_1$, $x_2$ be the corresponding variables for two different base points $b_1$, $b_2$ on the link diagram $D$. Merging with two small circles at these base points defines a $p$-DG module structure on $C(D)$ over the $p$-DG algebra $\Bbbk[x_1,x_2]/(x_1^p,x_2^p)$. Setting $y=x_1-x_2$, by Lemma \ref{lem-tensor-smash}, we have isomorphisms
\[
A^{\otimes 2}\# H \cong \dfrac{\Bbbk[x_i,y]}{(x_i^p,y^p)}\# H \cong A_y\otimes B_i, \quad \quad i=1,2.\]
Here we have identified $A_y=\Bbbk[y]/(y^p)$ and the matrix algebras $B_i\cong\mathrm{M}(p,\Bbbk)$, $i=1,2$, with
\[
B_i \cong \dfrac{\Bbbk[x_i]}{(x_i^p)}\# H.
\]
Under the above identifications, the reduced chain complexes at $x_1$
and $x_2$, denoted temporarily by $\bar{C}_1(D)$ and $\bar{C}_2(D)$,
are both isomorphic to
\[
\bar{C}_i(D):= C(D)\otimes_{B_i} \left( \dfrac{\Bbbk[x_i]}{(x_i^p)} \right)\cong
C(D)\otimes_{A_y\otimes B_i} \left( \dfrac{\Bbbk[y,x_i]}{(y^p, x_i^p)}\right)
\cong
C(D)\otimes_{A^{\otimes 2}\# H} \left( \dfrac{\Bbbk[x_1,x_2]}{(x_1^p, x_2^p)}\right).
\]
One can also check that the isomorphism is compatible with topological differentials by the functoriality of reductions. The theorem follows by identifying the complexes $\bar{C}_1(D)$ and $\bar{C}_2(D)$ through this chain of isomorphisms.
\end{proof}

 \section{Infinitesimal \texorpdfstring{$\mathfrak{sl}(2)$}{sl(2)} symmetries of \texorpdfstring{$\mathfrak{gl}(p)$}{gl(p)}-homology}

\subsection{Background on graded representation theory of restricted \texorpdfstring{$\mathfrak{sl}(2)$}{sl(2)}}
In \cite{QRSW2, QRSW3}, the differential operator $\dif_-$ is incorporated into a larger Lie algebra action on  $\mathfrak{gl}(N)$-equivariant foams and conjecturally Khovanov--Rozansky homology. Namely, one-half of the usual Witt algebra acts on the state space associated to closed webs and conjecturally on link homology groups. Of particular interest to us is a copy of $\mathfrak{sl}(2)$ sitting inside the Witt algebra which does act on link homology. Over a field of characteristic $p\geq 3$, the usual generators of $\mathfrak{sl}(2)$ correspond to the differential operators (their usual notation in parenthesis)
\begin{equation}
    \dif_-=-\frac{\dif}{\dif x} (=e), \quad \quad \dif_0 = -2x\frac{\dif}{\dif x}(=h), \quad \quad
    \dif_+ = x^2 \frac{\dif}{ \dif x}(=f).
\end{equation}
In this subsection, we gather together some well-known background information on graded representation theory of $\mathfrak{sl}(2)$ in characteristic $p$. Throughout, we will assume that $p$ is a prime greater than $2$ and $\Bbbk$ is a field of characteristic $p$. 

Let $\Ul(\mathfrak{sl}(2))$ and $\ul(\mathfrak{sl}(2))$ be the usual universal enveloping and restricted universal enveloping algebras of $\mathfrak{sl}(2)$ respectively. Namely, as an associative algebra
\begin{equation}
    \ul(\mathfrak{sl}(2))=\Ul(\mathfrak{sl}(2))/(\dif_+^p, \dif_0^p-\dif_0, \dif_-^p).
\end{equation}
When no confusion can arise, we adopt the simplified notation $\Ul$ and $\ul$ for $\Ul(\mathfrak{sl}(2))$ and $\ul(\mathfrak{sl}(2))$ respectively. Both algebras are $\Z$-graded, with the degree convention fixed by
\begin{equation}\label{eqn-differential-grading}
    \mathrm{deg}(\dif_{\pm})=\pm 2,\quad \quad
    \mathrm{deg}(\dif_0)=0.
\end{equation}
We will also utilize Borel subalgebras of $\mathfrak{sl}(2)$ defined by 
\begin{equation}
    \mathfrak{b}_{+}:=\Bbbk\langle \dif_+, \dif_0\rangle \quad \quad \quad
        \mathfrak{b}_{-}:=\Bbbk\langle \dif_-, \dif_0\rangle.
\end{equation}
 Let $\Ul_\pm$ and $\ul_{\pm}$ be the usual distribution
  and restricted universal enveloping algebras of $\mathfrak{b}_{\pm}$ respectively. They are naturally $\Z$-graded according to \eqref{eqn-differential-grading}.

As a convention, when $A = \oplus_{i\in \Z} A_i$ is a graded algebra, and $M=\oplus_{i\in \Z} M_i$ is a graded $A$-module in the sense that $A_iM_j\subset M_{i+j}$, we define, for any $k\in \Z$,
\begin{equation}
    q^k M:= \oplus_{j\in \Z}(q^k M)_j, \quad \quad
    (q^kM)_j=M_{j-k}.
\end{equation}
Namely $q^kM$ is has the same underlying $A$-module structure as $M$, but its degree is shifted up by $k$.

 For the integers $\lambda\in \{0,\dots, p-1\}$, there is a balanced simple $\ul$-module $L(\lambda)$ of dimension $\lambda+1$ defined similarly as in characteristic zero. More explicitly, set $L(0)=\Bbbk$ to be the trivial $\ul$-module, and consider $L(1)=\Bbbk \langle w_{\pm 1} \rangle$, where the nonzero actions of $\dif_0$, $\dif_\pm$ are given by
 $$
 \dif_0(w_{\pm 1})=\pm w_{\pm 1}, \quad \quad
 \dif_+(w_{-1})=w_{1}, \quad \quad 
 \dif_-(w_1)=w_{-1}.
 $$
 Then, for $\lambda \in \{1,\dots, p-1\}$, define $L(\lambda):= S^\lambda(L(1))$, where the notation
denotes the polynomial space of degree $\lambda$ in a tensor power of
$L(1)$. One can readily check that such $L(\lambda)$'s are
simple. Moreover, any graded simple $\ul$-module is obtained as a
degree shifted copy of $L(\lambda)$ as follows. For $\mu \in \Z$,
write by Euclidean division (with remainder) that
$\mu=\mu_0+p\mu_1$ where $\mu_0\in \{0,\dots, p-1\}$ and define:
\[
L(\mu): =q^{-p\mu_1} L(\mu_0).
\]

 We also recall another class of important $\ul$-modules. For $\lambda \in \Z$, define 
 $\Delta(\lambda)$ to be the $\ul$-module as follows. Consider the 1-dimensional $\mathfrak{b}_-$-
 module $\Bbbk_\lambda=\Bbbk u_0$, where $\mathrm{deg}(u_0)=\lambda$, $\dif_0$ acts on $u_0$ by the scalar $\lambda$ and $\dif_-(u_0)=0$. Set
 \[
 \Delta(\lambda):= \ul\otimes_{\ul_-}\Bbbk_\lambda.
 \]
 The usual PBW Theorem implies that the module $\Delta(\lambda)$ is free over $\ul_+\cong \Bbbk[\dif_+]/(\dif_+^p)$. A basis of this module is given by 
 $$\left\{u_i:=\frac{1}{i!}\dif_+^i\otimes u_0| 0\leq i \leq p-1\right\} .$$ 
 The explicit $\mathfrak{sl}(2)$-action on $\Delta(\lambda)$ on the basis is given by 
\[
\dif_0 \cdot u_i = (\lambda-2i)u_i, \quad \quad
\dif_- \cdot u_i = (\lambda-i+1) u_{i-1}, \quad \quad
\dif_+ \cdot u_i = (i+1) u_{i+1} \ .
\]
 
Let $\nabla(\lambda)$ be the $\mathfrak{sl}(2)$-module with basis $\{v_0,\ldots,v_{p-1}\}$ with action given by
\[
\dif_0 \cdot v_i = (\lambda-2i)v_i, \quad \quad
\dif_- \cdot v_i = -i v_{i-1}, \quad \quad
\dif_+ \cdot v_i = (-\lambda+i) v_{i+1} \ .
\]

The module $\Delta(\lambda)$ is known as the \emph{baby Verma modules of highest weight $\lambda$}, while the module $\nabla(\lambda)$ is known as the \emph{dual baby Verma modules of highest weight $\lambda$}. Schematically, we depict the $\mathfrak{sl}(2)$ action as follows.
\begin{subequations} \label{vermacovermapictures} 
\begin{equation}
\Delta(\lambda): \quad
\xymatrix{
u_0 \ar@(ul,ur)^{\dif_0=\lambda} \ar@/^/[rr]^{\dif_+=1} && u_1\ar@(ul,ur)^{\dif_0=\lambda-2} \ar@/^/[ll]^{\dif_-=\lambda} \ar@/^/[rr]^{\dif_+=2}  && \cdots \ar@/^/[ll]^{\dif_-=\lambda-1}  \ar@/^/[rr]^{\dif_+=p-2} && u_{p-2}\ar@(ul,ur)^{\dif_0=\lambda-2p+4}\ar@/^/[ll]^{\dif_-=\lambda-(p-3)}  \ar@/^/[rr]^{\dif_+=p-1} && u_{p-1}\ar@(ul,ur)^{\dif_0=\lambda-2p+2}\ar@/^/[ll]^{\dif_-=\lambda-(p-2)} 
}
\end{equation}
\begin{equation}
\nabla(\lambda): \quad
\xymatrix{
v_0 \ar@(ul,ur)^{\dif_0=\lambda} \ar@/^/[rr]^{\dif_+=-\lambda} && v_1
\ar@(ul,ur)^{\dif_0=\lambda -2} \ar@/^/[ll]^{\dif_-=p-1}
\ar@/^/[rr]^{\dif_+=1-\lambda}  && \cdots \ar@/^/[ll]^{\dif_-=p-2}
\ar@/^/[rr]^{\dif_+=-3-\lambda} && v_{p-2}
\ar@(ul,ur)^{\dif_0=\lambda-2p+4} \ar@/^/[ll]^{\dif_-=2}
\ar@/^/[rr]^{\dif_+=-2-\lambda} && v_{p-1} \ar@(ul,ur)^{\dif_0=\lambda
-2p +2}\ar@/^/[ll]^{\dif_-=1} 
}
\end{equation}
\end{subequations}

For $\lambda=p-1$, the modules $L(p-1)$,$\Delta(p-1)$, and $\nabla(p-1)$ are isomorphic. This module is usually referred to as
the \emph{Steinberg module} for $\ul$.

Since $\ul$ is a graded Hopf algebra, there is a natural grading-reversing duality on $\ul$-modules, which we denote by $(\mbox{-})^*$: Given a $\ul$-module $M$, and any $x\in \mathfrak{sl}(2)$, $x$ acts on dual vector space $M^*$ of $M$ by
\begin{equation}\label{eqn-dual-rep}
(x\cdot f)(v):=-f(x\cdot v),
\end{equation}
for any $f\in M^*$ and $v\in M$. Under this duality, it is an easy exercise to show that, for any $\lambda\in \Z$,
\begin{equation}
\Delta(\lambda)^* \cong \Delta(2p-2-\lambda), \quad \quad
\nabla(\lambda)^* \cong \nabla(2p-2-\lambda).
\end{equation}

There is another important duality in the representation theory of $\ul$, although we will not use it in this paper. Recall that there is a degree-reversing Cartan involution $\omega \colon \ul \rightarrow \ul$ determined by
\[
\dif_+ \mapsto \dif_- \ , \quad \dif_- \mapsto \dif_+ \ , \quad
\dif_0 \mapsto - \dif_0 \ .
\]
Under this automorphism, taking the vector space dual on a
representation followed by twisting the $\ul$-action by $\omega$
exchanges baby Verma modules and dual baby Verma modules as follows 
\[
\Delta(\lambda)^\omega \leftrightsquigarrow \nabla(2p-2-\lambda) \ .
\]
Indecomposable graded projective-injective modules are also labeled by $\lambda\in
\Z$. A theorem of Bellamy--Thiel (\cite[Theorem
5.1]{BeTh2}) shows that when a graded $\ul$-module has simultaneous
$\Delta$ and $\nabla$ filtrations\footnote{By $\Delta$ and $\nabla$
  filtrations, we mean, filtrations by baby Verma modules and by dual
  baby Verma modules respectively.}, then it is graded projective-injective, and is a direct sum of graded indecomposable projective-injective modules that we next describe. For now, notice that, if $\lambda=kp-1$ for all $k\in \Z$, then 
\begin{equation}
    \Delta(\lambda)\cong\nabla(\lambda)\cong L(\lambda),
\end{equation} and thus it is not only simple, but also
projective-injective by Bellamy--Thiel's Theorem. 

Indecomposable graded projective modules have been extensively studied. For instance, in \cite{GeKa}, they are constructed as follows. For $\lambda\in \{0,\dots, p-2\}$, form the tensor product $L(p-1-\lambda)\otimes L(p-1)$ and project it onto the direct summand containing the highest weight vector. The summand obtained is $2p$-dimensional, with the nonzero $\dif_{\pm}$ action on weight vectors depicted in the diagram (the $\dif_0$-action has the effect of counting the $q$-degree mod $p$, and is thus omitted here):
\begin{equation}\label{eqn-indecomposable-projective}
\begin{gathered}
\xymatrix@C=0.75em{
&& 2p-\lambda-2 & \dots & \lambda+2 & \lambda & \lambda-2 & \dots & 2-\lambda & -\lambda & -2-\lambda & \dots & \lambda+2-2p\\
&& & & & \bullet \ar[dl]^{\dif_-} \ar[ddr]^{\dif_+} \ar@/^/[r]^{\dif_+} & \bullet\ar@/^/[l]^{\dif_-} \ar@/^/[r]^{\dif_+} \ar[ddr]^{\dif_+}   & \cdots \ar@/^/[l]^{\dif_-} \ar@/^/[r]^{\dif_+} & \bullet \ar[ddr]^{\dif_+}   \ar@/^/[l]^{\dif_-} \ar@/^/[r]^{\dif_+} & \bullet \ar@/^/[l]^{\dif_-}\ar[dr]^{\dif_+}\\
P(\lambda): && \bullet \ar@/^/[r]^{\dif_+} & \cdots \ar@/^/[l]^{\dif_-} \ar@/^/[r]^{\dif_+}  & \bullet \ar@/^/[l]^{\dif_-} \ar[dr] ^{\dif_+}&&&&&&\bullet \ar[dl]^{\dif_-} \ar@/^/[r]^{\dif_+} & \cdots\ar@/^/[l]^{\dif_-}  \ar@/^/[r]^{\dif_+} & \bullet \ar@/^/[l]^{\dif_-} \\
&& & & & \bullet  \ar@/^/[r]^{\dif_+} & \bullet\ar@/^/[l]^{\dif_-} \ar@/^/[r]^{\dif_+} & \cdots \ar@/^/[l]^{\dif_-} \ar@/^/[r]^{\dif_+} & \bullet \ar@/^/[l]^{\dif_-} \ar@/^/[r]^{\dif_+} & \bullet \ar@/^/[l]^{\dif_-}
}
\end{gathered}
\end{equation}
More generally, if $\lambda=\mu_0+p\mu_1$, with $\mu_0\in \{0,\dots, p-2\}$, then $q^{-p\mu_1}P(\mu_0)$ is the indecomposable projective cover and injective envelope of the simple module $L(\lambda)$. Together with $L(kp-1)$, $k\in \Z$, the modules $P(\lambda)$, $\lambda\in \Z\backslash \{kp-1|k\in \N\}$ constitute the full list of graded projective-injective indecomposables.

\begin{prop}\label{prop-projective-criterion}
    Let $M$ be a finite-dimensional graded $\ul$-module. Suppose $M$ has a graded $\nabla$-filtration. Then $M$ is graded projective-injective if and only if it is free as a module over $\Bbbk[\dif_+]/(\dif_+^p)$.
\end{prop}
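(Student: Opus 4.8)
The plan is to prove both implications separately, with the forward direction being essentially immediate from the structure theory recalled above, and the reverse direction requiring the real work. For the forward direction, suppose $M$ is graded projective-injective. By the classification recalled before the statement, $M$ is a direct sum of degree-shifted copies of the modules $P(\mu_0)$ ($\mu_0 \in \{0,\dots,p-2\}$) and the Steinberg-type modules $L(kp-1) \cong \Delta(kp-1) \cong \nabla(kp-1)$. It therefore suffices to check that each of these building blocks is free over $\ul_+ \cong \Bbbk[\dif_+]/(\dif_+^p)$. For the Steinberg modules this is exactly the PBW-freeness of $\Delta(kp-1)$ over $\ul_+$ recalled in the text. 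For $P(\mu_0)$, one reads off from the diagram \eqref{eqn-indecomposable-projective} that the $\dif_+$-action, restricted to the appropriate weight spaces, organizes the $2p$-dimensional space into two free $\Bbbk[\dif_+]/(\dif_+^p)$-orbits (or one can invoke the $\Delta$-filtration of $P(\mu_0)$ together with PBW-freeness of each $\Delta$-layer and the fact that freeness over $\Bbbk[\dif_+]/(\dif_+^p) = \Bbbk[\dif_+]/(\dif_+^p)$ is detected by dimension count plus injectivity of $\dif_+^{p-1}$-action, which is a local-ring/Nakayama argument since $\Bbbk[\dif_+]/(\dif_+^p)$ is a graded local Frobenius algebra).

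For the reverse direction, assume $M$ has a graded $\nabla$-filtration and is free over $\ul_+$. The key point is to invoke the Bellamy--Thiel criterion recalled above: a graded $\ul$-module with simultaneous $\Delta$- and $\nabla$-filtrations is graded projective-injective. So the task reduces to producing a graded $\Delta$-filtration of $M$ from the hypotheses. The strategy is induction on $\dim M$. One wants to extract a baby Verma submodule $\Delta(\lambda) \hookrightarrow M$ sitting at the bottom (or top) of a $\Delta$-filtration, such that the quotient $M/\Delta(\lambda)$ still has a graded $\nabla$-filtration and is still $\ul_+$-free. Freeness over $\ul_+$ is what guarantees the right dimension count: each $\nabla(\lambda)$ is $p$-dimensional and $\ul_+$-free, so $M$ being $\ul_+$-free of the same total dimension is consistent with a $\Delta$-filtration by $p$-dimensional $\Delta$-layers, and the point is that $\ul_+$-freeness is precisely the condition that upgrades a "numerically possible" $\Delta$-filtration to an actual one. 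Concretely, I would take a highest (or suitably extremal) weight vector $m \in M$ generating a $\ul$-submodule, analyze its weight via the $\dif_0$-action, and use $\ul_+$-freeness to show the submodule it generates is a full baby Verma module $\Delta(\lambda)$ rather than a proper quotient; then check that the short exact sequence $0 \to \Delta(\lambda) \to M \to M/\Delta(\lambda) \to 0$ has $\ul_+$-free quotient (which follows since $\Delta(\lambda)$ is $\ul_+$-free and $\ul_+ = \Bbbk[\dif_+]/(\dif_+^p)$ is self-injective, so freeness passes to quotients in a s.e.s. of free modules) and that $M/\Delta(\lambda)$ inherits a $\nabla$-filtration (this uses that the $\Delta$-layer we removed was chosen compatibly with the $\nabla$-filtration — e.g. removing it from the socle or using a dimension/weight-extremality argument against the filtration).

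The main obstacle is the reverse direction's extraction step: showing that the $\nabla$-filtered, $\ul_+$-free module $M$ admits a $\Delta$-submodule whose removal preserves both properties. The subtlety is that having a $\nabla$-filtration and being $\ul_+$-free does not obviously hand you a $\Delta$-submodule — one must genuinely use the interaction between the two structures. I expect the cleanest route is to argue via $\Ext^1$ vanishing: the hypotheses should force $\Ext^1_{\ul}(\nabla(\mu), M) = 0$ for all relevant $\mu$ in the graded sense (this is where $\ul_+$-freeness enters, perhaps through a Koszul-type resolution of $\Bbbk$ over $\ul_+$ and a comparison of $\ul$ with $\ul_+$), and then a standard homological argument shows that a $\nabla$-filtered module with no self-extensions by $\nabla$'s, together with the projectivity-detecting condition, must also be $\Delta$-filtered — at which point Bellamy--Thiel closes the argument. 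Alternatively, if the authors have in mind a more hands-on approach, one decomposes $M$ into indecomposables and checks that each indecomposable summand with a $\nabla$-filtration that is $\ul_+$-free must be one of the $P(\lambda)$ or $L(kp-1)$ on the classification list, by matching $\ul_+$-generators to weights; the obstacle there is ruling out a hypothetical indecomposable with a $\nabla$-filtration of length $\geq 2$ that happens to be $\ul_+$-free but is not self-dual/projective, which again comes down to an $\Ext$ computation.
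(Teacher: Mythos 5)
Your overall architecture is the paper's: the ``only if'' direction is read off from the classification of graded projective-injective indecomposables, and the ``if'' direction reduces, via the Bellamy--Thiel criterion, to manufacturing a graded $\Delta$-filtration of $M$ from $\ul_+$-freeness. But you stop exactly at the step that carries the content --- extracting a baby Verma submodule --- and the routes you sketch there ($\Ext^1$-vanishing against the $\nabla$'s, or classifying indecomposables) are detours. The extraction is a one-line degree argument: choose a homogeneous $\ul_+$-basis of $M$ and let $v$ be a generator of a free $\Bbbk[\dif_+]/(\dif_+^p)$-summand of lowest degree. Since $\dif_+$ has degree $+2$, every element of $M$ lies in degree $\geq \deg v$, so $\dif_-(v)\in M_{\deg v-2}=0$ automatically. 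Then $\ul\cdot v=\ul_+\cdot v$ by PBW, and this is $p$-dimensional because $v$ generates a \emph{free} $\ul_+$-summand (so $\dif_+^{p-1}v\neq 0$); hence $\ul\cdot v\cong\Delta(\deg v)$. Your worry about whether the submodule generated by an extremal vector is a full baby Verma rather than a proper quotient is thus answered precisely by the freeness hypothesis, not by any $\Ext$ computation.

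Two further points where your proposal overcomplicates or misfires. First, you require that $M/\Delta(\lambda)$ inherit a graded $\nabla$-filtration; this is not needed. The $\nabla$-filtration of $M$ is used exactly once, at the very end, when Bellamy--Thiel is applied to $M$ itself; the induction only has to preserve $\ul_+$-freeness of the quotient, which is immediate here because $\ul\cdot v$ is by construction one of the free $\ul_+$-direct summands of $M$ (your self-injectivity argument for why freeness passes to the quotient is correct but unnecessary). Second, be careful with the conventions in this paper: $e=\dif_-$ has degree $-2$ and $f=\dif_+$ has degree $+2$, and $\Delta(\lambda)$ is generated by a vector killed by $\dif_-$ and free over $\ul_+=\Bbbk[\dif_+]/(\dif_+^p)$; so the ``extremal weight vector'' you want is a \emph{lowest-degree} vector, and its annihilation by $\dif_-$ comes for free. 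With the extraction step filled in as above, your proof closes and coincides with the paper's.
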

\begin{proof}
The ``only if'' part is clear from the description of indecomposable projective-injectives.

For the if part, by the theorem of Bellamy--Thiel (\cite[Theorem 5.1]{BeTh2}), it suffices to show that when a module $M$ is graded free over $\Bbbk[\dif_+]/(\dif_+^p)$, one can build a $\Delta$-filtration. To do so, choose a homogeneous vector $v$ in $M$ that generates a free $\Bbbk[\dif_+]/(\dif_+^p)$-summand that has the lowest degree, then $\dif_-(v)=0$ for degree reasons. Clearly, the $\mathfrak{sl}(2)$-submodule generated by $v$ is a baby Verma submodule. Now we can inductively build a $\Delta$-filtration by taking the quotient $M/(\ul\cdot v)$.
\end{proof}

\subsection{Web and link homology as representations}
In \cite{QRSW2,QRSW3}, an action of $\mathfrak{sl}(2)$ has been studied on foam evaluations over the base ring $\Bbbk_N:=\Bbbk[E_1,\dots, E_N]$ of symmetric functions. When $N=p$ and $\Bbbk$ is a field of finite characteristic $p>2$, the maximal ideal in $\Bbbk_N$ generated by positive degree symmetric functions is preserved under the $\mathfrak{sl}(2)$-action. The universal construction arising from foam evaluations in \cite{RW} allows us to quotient out the maximal ideal for equivariant link homology groups, and obtain a finite-dimensional $\mathfrak{gl}(p)$-Khovanov--Rozansky homology in characteristic $p$ carrying an $\mathfrak{sl}(2)$-action. Given a web or link diagram $D$, we will denote this finite-dimensional homology space by $\mH(D)$, which carries a graded $\mathfrak{sl}(2)$-structure. 

Let $t_1,t_2\in \Bbbk$ be two numbers whose sum is equal to $1$, and set $\bar{t}_i=1-t_i$ for $i=1,2$. Recall that the $\mathfrak{gl}(p)$-Khovanov--Rozansky homology is built out of resolving the two crossing diagrams:
\begin{subequations}
\begin{equation}
  T= \NB{\tikz[xscale = 0.6]{\begin{scope}[font=\tiny]
  \draw[->] (0.5, -0.5) ..controls +(0,0.3) and +(0,-0.3) .. (-0.5,
  0.5) node[pos=1, above] {} coordinate[pos =0.2] (t1);
  \fill[white] (0,0) circle (2mm);
  \draw[->] (-0.5, -0.5) ..controls +(0,0.3) and +(0,-0.3) .. (0.5,
  0.5) node[pos=1, above] {} coordinate[pos =0.2] (t2);
\end{scope}}} :=
  q^{p}\left(
    \NB{  \tikz[xscale = 3.5, yscale = 3]{
    \node (i0) at (0, 0) { $q^{-1}$ \NB{\tikz[font= \tiny,
  scale=0.6]{\begin{scope}
  \coordinate (bl) at (-0.5, -1);
  \coordinate (br) at ( 0.5, -1);
  \coordinate (tl) at (-0.5,  1);
  \coordinate (tr) at ( 0.5,  1);
    \coordinate (ml) at (-0.5,  -.8);
        \coordinate (Ml) at (-0.5,  .8);
 \coordinate (mr) at (0.5,  -.6);
\coordinate (Mr) at (0.5,  .6);
 
   \draw[>->] (bl) -- (tl) node[pos = 0, below] {$1$} node[pos = 1,
  above] {$1$} node[below, pos
  =0] {} coordinate[pos = 0.5] (g1);
  \filldraw[draw= green!50!black, fill = white] (g1) circle (1mm)
  node[left, green!50!black] {$-t_1$};
  
  
    \draw[>->] (br) -- (tr) node[pos = 0, below] {$1$} node[pos = 1, above] {$1$} node[below, pos
  =0] {} coordinate[pos = 0.5] (g2);
  \filldraw[draw= green!50!black, fill = white] (g2) circle (1mm)
  node[right, green!50!black] {$-t_2$};
  

\end{scope}}} };
     \node (i1) at (-1, 0) {\NB{\tikz[font= \tiny,
  scale=0.6]{\begin{scope}
  \coordinate (bl) at (-0.5, -1);
  \coordinate (br) at ( 0.5, -1);
  \coordinate (bm) at (  0,-0.3);
  \coordinate (tl) at (-0.5,  1);
  \coordinate (tr) at ( 0.5,  1);
  \coordinate (tm) at (  0, 0.3);
  \draw[>-]  (bl) .. controls +( 0, 0.5) and +(0,0) .. (bm)
  node[below, pos = 0] {$1$};
  \draw[>-]  (br) .. controls +( 0, 0.5) and +(0,0) .. (bm)
  node[below, pos = 0] {$1$};
  \draw[<-]  (tl) .. controls +( 0, -0.5) and +(0,0) .. (tm)
  node[above, pos = 0] {$1$} coordinate[pos = 0.25] (ga) ;

  \draw[<-]  (tr) .. controls +( 0, -0.5) and +(0,0) .. (tm)
  node[above, pos = 0] {$1$} coordinate[pos = 0.25] (gb) ;
  \draw [->-] (bm) -- (tm) node[left, pos = 0.5] {$2$};
 
\end{scope}}} };
\draw[->] (i1) -- (i0) coordinate[pos=0.5] (a);
\node[above] at (a) {\NB{\tikz[font=\tiny, scale=.5]{\begin{scope}
  \begin{scope}
    \coordinate (L1) at (0.2,0.4);
    \coordinate (L2) at (0,0);
    \coordinate (R1) at (2.2,0.4);
    \coordinate (R2) at (2,0);
    \coordinate (ML) at (0.6, 0.2);
    \coordinate (MR) at (1.6, 0.2);
    \draw[double] (ML) -- (MR);
    \draw (MR) .. controls +(0, 0) and +(-0.3,0) .. (R1) ;
    \draw (MR) .. controls +(0, 0) and +(-0.3,0) .. (R2);
    \draw (L1) .. controls +( 0.3, 0) and +(0,0) .. (ML);
    \draw (L2) .. controls +( 0.3, 0) and +(0,0) .. (ML);
  \end{scope}  
 \begin{scope}[yshift = 1cm]
    \coordinate (L1B) at (0.2,0.4);
    \coordinate (L2B) at (0,0);
    \coordinate (R1B) at (2.2,0.4);
    \coordinate (R2B) at (2,0);
    \draw (L1B) .. controls +( 0, 0) and +(0,0) .. (R1B); 
    \draw (L2B) .. controls +( 0, 0) and +(0,0) .. (R2B);
 \end{scope}  
  \draw (R1) -- (R1B);
  \draw (R2) -- (R2B);
  \draw (L1) -- (L1B);
  \draw (L2) -- (L2B);
  \draw[thick] (ML) .. controls +(0, 0.6) and +(0, 0.6) .. (MR);
\end{scope}}}};
  }}
\right) \ ,
    \end{equation}
    \begin{equation}
  T^{-1}= \NB{\tikz[xscale = 0.6]{\begin{scope}[font=\tiny]
  \draw[->] (-0.5, -0.5) ..controls +(0,0.3) and +(0,-0.3) .. (0.5,
  0.5);
  \fill[white] (0,0) circle (2mm);
  \draw[->] (0.5, -0.5) ..controls +(0,0.3) and +(0,-0.3) .. (-0.5,
  0.5);
\end{scope}}}:=
q^{-p}  \left(  \NB{\tikz[xscale = 3.5, yscale = 3]{
    \node (i0) at (-1, 0) {  $q$\ \NB{\tikz[font= \tiny,
  scale=0.6]{\begin{scope}
  \coordinate (bl) at (-0.5, -1);
  \coordinate (br) at ( 0.5, -1);
  \coordinate (tl) at (-0.5,  1);
  \coordinate (tr) at ( 0.5,  1);
    \coordinate (ml) at (-0.5,  -.8);
        \coordinate (Ml) at (-0.5,  .8);
 \coordinate (mr) at (0.5,  -.6);
\coordinate (Mr) at (0.5,  .6);
 
   \draw[>->] (bl) -- (tl) node[pos = 0, below] {$1$} node[pos = 1,
  above] {$1$} node[below, pos
  =0] {} coordinate[pos = 0.5] (g1);
  \filldraw[draw= green!50!black, fill = white] (g1) circle (1mm)
  node[left, green!50!black] {$\bar{t}_1$};
  
  
    \draw[>->] (br) -- (tr) node[pos = 0, below] {$1$} node[pos = 1, above] {$1$} node[below, pos
  =0] {} coordinate[pos = 0.5] (g2);
  \filldraw[draw= green!50!black, fill = white] (g2) circle (1mm)
  node[right, green!50!black] {$\bar{t}_2$};
  

\end{scope}}} };
     \node (i1) at (0, 0) {  \NB{\tikz[font= \tiny,
  scale=0.6]{}} };
\draw[->] (i0) -- (i1) coordinate[pos=0.5] (b);
\node[above] at (b) {\NB{\tikz[font=\tiny, scale=.5]{\begin{scope}
  \begin{scope}
    \coordinate (L1) at (0.2,0.4);
    \coordinate (L2) at (0,0);
    \coordinate (R1) at (2.2,0.4);
    \coordinate (R2) at (2,0);
    \coordinate (ML) at (0.6, 0.2);
    \coordinate (MR) at (1.6, 0.2);
    \draw[double] (ML) -- (MR);
    \draw (MR) .. controls +(0, 0) and +(-0.3,0) .. (R1) ;
    \draw (MR) .. controls +(0, 0) and +(-0.3,0) .. (R2);
    \draw (L1) .. controls +( 0.3, 0) and +(0,0) .. (ML);
    \draw (L2) .. controls +( 0.3, 0) and +(0,0) .. (ML);
  \end{scope}  
 \begin{scope}[yshift = -1cm]
    \coordinate (L1B) at (0.2,0.4);
    \coordinate (L2B) at (0,0);
    \coordinate (R1B) at (2.2,0.4);
    \coordinate (R2B) at (2,0);
    \draw (L1B) .. controls +( 0, 0) and +(0,0) .. (R1B);
    \draw (L2B) .. controls +( 0, 0) and +(0,0) .. (R2B); 
 \end{scope}  
  \draw (R1) -- (R1B);
  \draw (R2) -- (R2B);
  \draw (L1) -- (L1B);
  \draw (L2) -- (L2B);
  \draw[thick] (ML) .. controls +(0, -0.6) and +(0, -0.6) .. (MR);
\end{scope}

}}}; 
      }} 
      \right)
  \ .
    \end{equation}
  \end{subequations}
  Here in both complexes we assume (different from the convention in \cite{QRSW2}) that the terms
\[
  \NB{\tikz[font= \tiny,
  scale=0.6]{}} 
\]
sit in cohomological degrees $t=- 1$ and $t=1$ respectively. 

We refer the reader to \cite[Theorem 4.3]{QRSW3} to see how the action of $\mathfrak{sl}_2$ is defined in the equivariant setting and \cite[Section 6.1]{QRSW3} when the equivariant parameters are set to zero in characteristic $p$.  The green dots above indicate how the $\mathfrak{sl}_2$-action is twisted on the state spaces associated to foams.
Note that we obtain an invariant of unframed links using the convention above.

\begin{example}\label{eg-unknot}
As an $\mathfrak{sl}(2)$-module, the homology of the unknot $\ovoid$
is:
\begin{equation}
\mH(\ovoid) \cong 
\nabla(p-1) :
\quad
\xymatrix{
v_0 \ar@/^/[rr]^{\dif_+=1} && v_1 \ar@/^/[ll]^{\dif_-=p-1} \ar@/^/[rr]^{\dif_+=2}  && \cdots \ar@/^/[ll]^{\dif_-=p-2}  \ar@/^/[rr]^{\dif_+=p-2} && v_{p-2} \ar@/^/[ll]^{\dif_-=2}  \ar@/^/[rr]^{\dif_+=p-1} && v_{p-1}\ar@/^/[ll]^{\dif_-=1} 
}
\end{equation}
This is computed as in \cite[Section 7.1]{QRSW3} where one makes the necessary modifications in characteristic $p$, while also killing the equivariant parameters. More explicitly, in the language of foams (as in \cite{QRSW3}), the homology is linearly spanned by a cup with $i$ dots for $0 \leq i \leq p-1$:
\[
\mH(\ovoid)=\Bbbk\left\langle v_i=\NB{\tikz[]{\begin{scope}
  \draw (0,0) arc (180 :0: 0.5cm and 0.2cm);
  \draw[very thin] (0,0) arc (180 :0: 0.5cm and -0.6cm) node [pos=0.5,
  above] {$\bullet~i$};
  \draw (0,0) arc (180 :0: 0.5cm and -0.2cm);
\end{scope}

\]
Recall that
\[
\nabla(p-1)\cong \Delta(p-1)=L(p-1).
\]
This module is precisely the Steinberg module for $\ul(\mathfrak{sl}(2))$ in characteristic $p$.
Note that as a vector space, it is isomorphic to the cohomology of projective space.
\end{example}

\begin{example}
    Let $\ovoid_2$ be a circle labeled by $2$.  As a vector space it is isomorphic to the cohomology of the Grassmannian of $2$-planes in $p$-dimensional space.  It is spanned by Schur polynomials $s_{\lambda}$ where $\lambda$ is a partition fitting into a Young diagram with 2 rows and $p-2$ columns. This space has an $\mathfrak{sl}(2)$-action described in \cite{QRSW2} and should serve as the $\mathfrak{gl}(p)$-Khovanov--Rozansky homology of the unknot colored by the second exterior power of the fundamental representation.  As an $\mathfrak{sl}(2)$-representation 
    \[
\mH(\ovoid_2) \cong \bigoplus_{\lambda=0}^{\lfloor \frac{p-3}{4} \rfloor} P(4 \lambda+2-2p) \ .
    \]
    The $\mathfrak{sl}(2)$-action should be related to the action of the Steenrod algebra studied in \cite{Lenart}.  See also \cite{BeCoSteenrod} for the appearance of the Steenrod algebra in categorification.

    We now provide a few details for this calculation.  As mentioned above, the vector space is spanned by a cup foam of thickness two, decorated by Schur functions $s_{m,n}$ with $p-2 \geq m \geq n \geq 0$.  The $\mathfrak{sl}(2)$-action is described in \cite[Theorem 4.11]{QRSW2}.
   We will abuse notation and leave out the presence of the foam in the basis for the state space.  The cup does twist the action of the Lie algebra.  Accounting for this twisting, using
   \cite[Lemma 6.4]{EliasQislact}, one obtains the following formulas:
   \begin{equation}
   \begin{gathered}
       \dif_+ s_{m,n} = (m+2) s_{m+1,n} + (n+1) s_{m,n+1} \ ,
       \quad \quad
       \dif_- s_{m,n} = -(m+1) s_{m-1,n} - n s_{m,n-1} \ , \\
       \dif_0 s_{m,n} = -2(2+m+n) s_{m,n} \ .
       \end{gathered}
         \end{equation}

For $m+n$ even with $m+n \leq p-2$, let
\[
v_{m,n}=\sum_{i=0}^{\frac{m+n}{2}} (-1)^i \binom{m+n+1}{i} s_{m+n-i,i} \ .
\]
It is easy to show that these are the only vectors annihilated by $\dif_-$.  It is also straightforward to prove that if $m+n \leq \frac{p-3}{2}$ then $v_{m,n}$ generates a baby Verma module
$\Delta(-4-2m-2n)$.

Assume $\frac{p-3}{2} \geq n \geq \frac{p-3}{4}$.  Modulo the baby Verma modules described above, $\dif_-$ annihilates $s_{n,n}$.
One could also show that $\dif_-^{p-1} s_{n,n} \neq 0 $, so modulo the baby Verma submodules, these $s_{n,n}$ generate baby Verma modules.
Thus the vectors $v_{i,i}$ and $s_{\frac{p-3-2i}{2},\frac{p-3-2i}{2}}$
give rise to an extension of a baby Verma module by a baby Verma.
Such an extension is unique and must be isomorphic to $P(4i+2-2p)$.

More generally, let $\ovoid_k$ be the unknot diagram labelled by an
integer $k\in \N$. When $k\in \{1,2,\dots, p-1\}$, the graded
$\ul$-module $\mH(\ovoid_k)$ is projective. This can be seen as
follows. There are projection and injection maps from the disjoint
union of $k$ unknots $\sqcup_k \ovoid$ into and out of $\ovoid_k$,
given by the cylinder
\[
\NB{\tikz[]{\begin{scope}[scale=1.5]
  \draw  (2, 0) arc (0:-180:2cm and 0.5cm);
  \draw[densely dashed] (2,0) arc (0:180:2cm and 0.5cm);
  \draw[densely dashed] (0,0) circle (1.7cm and 0.425cm);
  \node[font=\small] at (1.3, 0) {$\dots$};
  \node[font =\small] at (-1.3, 0) {$\dots$};
  \node[font=\tiny] at (1.95,0) {$1$};
  \node[font=\tiny] at (1.65,0) {$1$};
  \node[font=\tiny] at (0.95,0) {$1$};
  \node[font=\tiny] at (-1.95,0) {$1$};
  \node[font=\tiny] at (-1.65,0) {$1$};
  \node[font=\tiny] at (-0.95,0) {$1$};
  \node[font=\normalsize] at (1.6,1.5) {$k$};
  \node[font=\normalsize] at (-1.6,1.5) {$k$};
  \draw[densely dashed] (0,0) circle (1cm and 0.25cm);
  \draw[thick] (1.5,1) arc (0:-180: 1.5cm and 0.375cm);
  \draw[densely dashed, thick] (1.5,1) arc (0:180: 1.5cm and 0.375cm);
  \draw (0,2) circle (1.5cm and 0.375cm);
  \begin{scope}[very thin]
  \draw (1.5, 1) .. controls +(0.3, 0) and +(0, 0.3) .. (2, 0)
  coordinate[pos=0.6] (a) node[pos=0.35, sloped, font=\small, below] {$\dots$}; 
  \draw[densely dashed] (1.7,0) .. controls +(0,0.2) and +(-0.1, -0) .. (a);
  \draw (-1.5, 1) .. controls +(-0.3, 0) and +(0, 0.3) .. (-2, 0)
  coordinate[pos=0.6] (b) node[pos=0.35, sloped, font=\small, below] {$\dots$}; 
  \draw[densely dashed] (-1.7,0) .. controls +(0,0.2) and +(0.1, -0) .. (b); 
  \draw[densely dashed] (1.5, 1) .. controls +(-0.3, 0) and +(0, 0.3) .. (1, 0);
  \draw[densely dashed] (-1.5, 1) .. controls +(0.3, 0) and
  +(0, 0.3) .. (-1, 0); 
  \draw (1.5, 1) -- +(0,1);
  \draw (-1.5, 1) -- +(0,1);
  \end{scope}
\end{scope}}}
\]
and its upside-down image. The composition of these maps is equal to $k!$, which is nonzero if $k\in \{1,\dots, p-1\}$. Furthermore, these foams are acted upon by $\ul$ trivially using the $\mathfrak{{sl}}(2)$-action in \cite{QRSW3}. More precisely, as in \cite{QRSW3}, $\dif_-$ acts trivially on trivially-decorated foams, $\dif_0$ acts trivially on the cylindrical foam thanks to equations (25) and (28), while $\dif_+$ acts trivially on it because of equations (34) and (36).
It follows that $\mH(\ovoid_k)$ is a $\ul$-direct summand of $\mH(\ovoid)^{\otimes k}$ whenever $k\in \{1,\dots, p-1\}$.  Since $\mH(\sqcup_k\ovoid)$ is the $k$-fold tensor product of the graded projective-injective Steinberg module, it is also projective-injective and so is any of its direct summands. Notice that, for $k \in \{1,\dots, p-1\}$, the symmetric group algebra $\Bbbk[S_k]$ is semisimple and commutes with the action of $\ul$. Using this, it is an easy exercise to see that one can identify $\mH(\ovoid_k)$ with the summand $\wedge^k \mH(\ovoid)$, the degree $k$ exterior tensor power of the Steinberg module.
\end{example}

\begin{example}
    Let $\Theta$ denote the theta graph, that is the graph with two
    vertices and three edges labeled by $1$, $1$ and $2$.
    Then for $t_1=1,t_2=0$
    \begin{equation} \label{eq:thetarep}
    \mH(\Theta) \cong L(1) \otimes \left(\bigoplus_{\lambda=0}^{\lfloor \frac{p-3}{4} \rfloor} P(4 \lambda+2-2p)\right) \ .
    \end{equation}
    The second tensor factor corresponds to the representation of the circle colored by $2$ in the previous example.  Note that this is a projective-injective module.

When $p=3$ and $t_1=t_2=2$, the operator $\dif_+$ annihilates the highest weight vector.  This precludes the possibility of $ \mH(\Theta) $ having a baby Verma filtration and thus it is not projective (see Proposition \ref{prop-projective-criterion}).  This small example shows that in general, the state space of a web need not be a graded projective $\ul$-module.
\end{example}

\begin{prop}\label{prop-dual-Verma-filtration}
   For a link diagram $D$, both the $\mathfrak{gl}(p)$-Khovanov--Rozansky chain complex and the homology of $D$ in characteristic $p$ have finite-step filtrations whose subquotients are dual baby Verma modules.
\end{prop}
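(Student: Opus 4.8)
The plan is to reduce both assertions to a single statement about the subalgebra $\Bbbk[\dif_-]/(\dif_-^p)=\Bbbk\langle e\rangle\subseteq\ul$: that each homological degree of the chain complex $C(D)$, as well as the homology $\mH(D)$, is \emph{free} as a module over $\Bbbk[\dif_-]/(\dif_-^p)$; and then to invoke the following mirror of Proposition~\ref{prop-projective-criterion}, proved by the same argument with $\dif_+$ and $\dif_-$ interchanged: \emph{every finite-dimensional graded $\ul$-module which is free over $\Bbbk[\dif_-]/(\dif_-^p)$ admits a $\nabla$-filtration.} Granting this, since the $\ul$-module underlying $C(D)$ is the finite direct sum of its (grading- and homological-degree-shifted) terms, and a finite direct sum of $\nabla$-filtered modules is $\nabla$-filtered, the result follows for the complex; and it follows verbatim for $\mH(D)$.

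The freeness over $\Bbbk[\dif_-]/(\dif_-^p)$ is essentially Proposition~\ref{prop-unreduced-reduced}. After choosing a base point $b$, merging a small unknot at $b$ makes $C(D)$ — and, since $\dif_-=e$ commutes with $d_T$, also $\mH(D)$ — into graded modules over $B=A\#H$ with $A=\Bbbk[x_b]/(x_b^p)$, i.e. into $p$-DG modules over $(A,\dif_-)$; here one checks, as in the computation on $\mH(\ovoid)$ in Example~\ref{eg-unknot}, that the operator $\dif_-$ of Section~\ref{sec-DG-Frob} is the operator $e$ of the $\mathfrak{sl}(2)$-action and that $x_b$ obeys the smash relation $\dif_- x_b=x_b\dif_- -1$. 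By Proposition~\ref{prop-unreduced-reduced} there are graded $B$-module isomorphisms $C(D)\cong\bar C(D)\otimes V$ and $\mH(D)\cong\bar\mH(D)\otimes V$, with $V$ the column $B$-module and $\bar C(D),\bar\mH(D)$ honest graded $\Bbbk$-spaces. Restricting along $H=\Bbbk[\dif_-]/(\dif_-^p)\hookrightarrow B$, it suffices to observe that $V\cong A$ with $\dif_-$ acting as $-\partial_x$, a single Jordan block of size $p$, hence free of rank one over $\Bbbk[\dif_-]/(\dif_-^p)$; therefore so are $C(D)$ in each homological degree and $\mH(D)$. (Alternatively one may quote Corollary~\ref{cor-acyclicity} directly.)

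For the mirror of Proposition~\ref{prop-projective-criterion}, let $M\cong\bigoplus_\alpha\Bbbk[\dif_-]w_\alpha$ be free over $\Bbbk[\dif_-]/(\dif_-^p)$; since $\dif_-$ lowers $q$-degree by $2$, each $w_\alpha$ is the top-degree element of its free summand. Choose $w$ of maximal $q$-degree $d$ among the free generators; then the top graded piece $M_d$ consists entirely of free generators, and since $\dif_0$ is diagonalisable (it satisfies $\dif_0^p=\dif_0$) and preserves $M_d$, we may take $w$ to be a $\dif_0$-eigenvector. Now $\dif_+ w=0$ for degree reasons, so $\ul w=\Bbbk[\dif_-]w$ is a $p$-dimensional $\ul$-submodule generated over $\Bbbk[\dif_-]$ by a $\dif_+$-killed $\dif_0$-eigenvector, i.e. a grading-shifted dual baby Verma module; and $M/\ul w$ is again free over $\Bbbk[\dif_-]/(\dif_-^p)$, so one inducts on $\dim M$. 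Applying this in each homological degree of $C(D)$, and to $\mH(D)$, yields the desired finite filtrations.

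I expect the only real work to be bookkeeping: confirming that the $p$-DG / module-over-$B$ structure used for the Shumakovitch--Wang theorem genuinely restricts to the $e$-action of the $\mathfrak{sl}(2)$ of Section~3, and keeping track of $q$-degree shifts so that the subquotients are literally (shifts of) the modules $\nabla(\lambda)$. Everything else is formal, resting on Proposition~\ref{prop-unreduced-reduced} (or Corollary~\ref{cor-acyclicity}) together with the construction in the proof of Proposition~\ref{prop-projective-criterion}.
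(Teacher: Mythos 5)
Your argument is correct, and it reaches the same destination by the mirror image of the paper's own route. The paper also starts from Proposition~\ref{prop-unreduced-reduced}, but it then works with the $A$-module structure: it identifies the socle $x_b^{p-1}\mC(D)$, observes that this is a $\mathfrak{b}_+$-submodule, filters it by one-dimensional graded $\mathfrak{b}_+$-modules, and tensors back up with $A$, so that each subquotient is $A$ as a $\dif_-$-module generated (under $\dif_-$) by a $\dif_+$-annihilated weight vector, i.e.\ a dual baby Verma. You instead extract from the same Proposition~\ref{prop-unreduced-reduced} (equivalently Corollary~\ref{cor-acyclicity}) the freeness over $\Bbbk[\dif_-]/(\dif_-^p)$, and then run the top-degree induction that is the literal mirror of the argument inside the proof of Proposition~\ref{prop-projective-criterion}. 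The two procedures locate the same generating vectors (the top-degree free generators over $\Bbbk[\dif_-]/(\dif_-^p)$ are exactly a $\mathfrak{b}_+$-stable complement inside the socle), so this is a repackaging rather than a new idea; what your version buys is a cleanly quotable intermediate statement (``finite-dimensional graded and free over $\Bbbk[\dif_-]/(\dif_-^p)$ implies $\nabla$-filtered'') that makes the symmetry with Proposition~\ref{prop-projective-criterion} explicit, at the cost of redoing the eigenvector/degree bookkeeping that the paper hides in the phrase ``filter $x_b^{p-1}\mC(D)$ as $\mathfrak{b}_+$-modules.'' The points you flag as remaining bookkeeping (that the Shumakovitch--Wang $\dif_-$ is the $e$ of the $\mathfrak{sl}(2)$-action, and that $\ul\otimes_{\ul_+}\Bbbk_\mu$ is a degree-shifted $\nabla(\lambda)$) are indeed the only things left to check, and both are used implicitly by the paper as well.
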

\begin{proof}
    Choose a base point $b$ on $D$, so that $\mC(D)$ and $\mH(D)$ are graded modules over $A=\Bbbk[x_b]/(x_b^p)$.  By Proposition \ref{prop-unreduced-reduced}$, \mC(D)$ and $\mH(D)$ are free modules over $A$, so that there are identifications
    \[
    \mC(D)\cong   A \otimes \bar{\mC}(D) ,\quad \quad \mH(D)\cong A \otimes \bar{\mH}(D) .
    \]
    For simplicity, we will carry out the rest of the argument for $\mC(D)$, as the case for $\mH(D)$ is similar.

    Identify the subcomplex $\bar{\mC}(D)$ with $x_b^{p-1}\cdot \mC(D)$. Then $\bar{\mC}(D)$ carries an action of $\mathfrak{b}_+$. Filter $x_b^{p-1}\mC(D)$ as $\mathfrak{b}_+$-modules so that the subquotients are 1-dimensional $\mathfrak{b}_+$-modules. Then tensoring this filtration with $A$ gives rise to a filtration of $\mC(D)$ whose subquotients are isomorphic to $A$ as $\dif_-$-modules. Since $\dif_-$ acted upon $x_b^{p-1}$ generates the entire $A$, it follows that the subquotients are dual baby Vermas. The desired result follows.
\end{proof}

\begin{cor}
    With respect to the operator $\dif_-$, the homology $\mH(D)$ is acyclic as a $p$-DG module.
\end{cor}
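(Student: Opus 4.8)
The plan is to read the corollary off from Proposition~\ref{prop-dual-Verma-filtration}. That proposition equips $\mH(D)$ with a finite filtration by $\ul$-submodules whose successive subquotients are dual baby Verma modules $\nabla(\lambda_i)$; restricting the $\ul$-action to the single operator $\dif_-$, this is in particular a finite filtration of $p$-DG modules over $\Bbbk[\dif_-]/(\dif_-^p)$. Hence it suffices to establish two things: (i) each $\nabla(\lambda)$ is acyclic with respect to $\dif_-$, and (ii) an extension of $\dif_-$-acyclic modules is again $\dif_-$-acyclic.

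For (i), inspect the action $\dif_-\cdot v_i=-i\,v_{i-1}$ on the basis $\{v_0,\dots,v_{p-1}\}$ of $\nabla(\lambda)$. Iterating, $\dif_-^{k}v_{p-1}$ is a nonzero scalar multiple of $v_{p-1-k}$ for $0\le k\le p-1$; indeed, by Wilson's theorem $\dif_-^{p-1}v_{p-1}=(p-1)!\,v_0=-v_0\neq 0$. Thus $v_{p-1}$ generates $\nabla(\lambda)$ freely over $R:=\Bbbk[\dif_-]/(\dif_-^p)$, so $\nabla(\lambda)$ is free of rank one over $R$, hence a contractible $p$-complex. Equivalently, under $v_i\mapsto x^i$ it is a grading shift of the contractible $p$-DG algebra $(A,\dif_-)$; cf. Corollary~\ref{cor-acyclicity}.

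For (ii), recall that $R=\Bbbk[\dif_-]/(\dif_-^p)$ is a $\Z$-graded local Frobenius algebra, so over $R$ the acyclic ($=$ contractible) $p$-complexes are exactly the free modules, and these coincide with the projectives and with the injectives. Given a short exact sequence $0\to M'\to M\to M''\to 0$ of graded $R$-modules with $M'$ and $M''$ free, projectivity of $M''$ splits the sequence, so $M\cong M'\oplus M''$ is again free. Applying this repeatedly along the finite filtration of Proposition~\ref{prop-dual-Verma-filtration} shows that $\mH(D)$ is a free graded $R$-module, hence a direct sum of grading shifts of the contractible $p$-complex $R$, and therefore $\mH(D)$ is acyclic as a $p$-DG module with respect to $\dif_-$.

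There is essentially no genuine obstacle here beyond bookkeeping: one only needs to note that the filtration produced in the proof of Proposition~\ref{prop-dual-Verma-filtration} is by $\dif_-$-stable submodules and that its subquotients are identified with the $\nabla(\lambda_i)$ as $\dif_-$-modules, both of which are transparent from that construction. Alternatively, one may bypass the filtration entirely: fixing a base point $b$, the homology $\mH(D)$ is already a $p$-DG module over $(A,\dif_-)=(\Bbbk[x_b]/(x_b^p),\dif_-)$ via the merging action, so acyclicity is immediate from Corollary~\ref{cor-acyclicity}.
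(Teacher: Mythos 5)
Your proof is correct and follows the paper's own argument: invoke Proposition~\ref{prop-dual-Verma-filtration} to get a finite filtration with dual baby Verma subquotients and observe that each $\nabla(\lambda)$ is acyclic for $\dif_-$. You simply make explicit two points the paper leaves implicit — that $\nabla(\lambda)$ is free of rank one over $\Bbbk[\dif_-]/(\dif_-^p)$ (via Wilson's theorem) and that acyclicity passes through extensions — and your alternative one-line route via Corollary~\ref{cor-acyclicity} is also valid.
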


\begin{proof}
By Proposition \ref{prop-dual-Verma-filtration}, $\mH(D)$ has a filtration with subquotients dual baby Verma modules.  Each such module is acyclic.
\end{proof}

 For a link diagram $D$, denote by $D^!$ its mirror image in $\R^3$. 
\begin{prop}
    For any link $D$, there is an isomorphism of graded $\mathfrak{sl}(2)$-modules $\mH(D)\cong \mH(D^!)^*$.
\end{prop}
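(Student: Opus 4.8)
The plan is to deduce the statement from the standard fact that mirroring a link diagram dualizes its Khovanov--Rozansky chain complex, and then to promote that chain-level isomorphism to an $\mathfrak{sl}(2)$-equivariant one. In the foam formalism of \cite{RW1, QRSW2, QRSW3}, the diagram $D^!$ is obtained from $D$ by reflecting the plane of the diagram; this turns each positive crossing into a negative one and, at the level of the cube of resolutions, reflects every foam. Since the foam evaluation pairing identifies the state space of a reflected closed web $\Gamma^!$ with the $\Bbbk$-linear dual of the state space of $\Gamma$, and since this reflection interchanges the zip and unzip maps appearing in the two crossing complexes displayed above (together with the auxiliary shifts $q^{\pm p}$), one obtains an isomorphism of cochain complexes
\[
\mC(D^!) \;\cong\; \mC(D)^{*},
\]
where the right-hand side is the graded dual complex with both the homological and the quantum gradings negated. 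With the unframed conventions fixed above the two $q$-shifts are exchanged, so no residual shift survives. Taking cohomology over the field $\Bbbk$ then already gives $\mH(D^!)\cong \mH(D)^{*}$ as bigraded vector spaces.

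Next I would check that this isomorphism intertwines the $\mathfrak{sl}(2)$-actions, where the right-hand side carries the dual representation of \eqref{eqn-dual-rep}. The action on $\mH(D)$ is induced from the operators $\dif_-,\dif_0,\dif_+$ acting on foam state spaces, twisted by the green-dot decorations as in \cite{QRSW3}. Under the foam reflection each of these operators is sent to its adjoint with respect to the foam evaluation pairing, and the key computation is that the \emph{twisted} operators are \emph{skew}-adjoint, i.e. $\langle \dif_\bullet a, b\rangle = -\langle a,\dif_\bullet b\rangle$. One verifies this directly on monomials in $A=\Bbbk[x]/(x^p)$ with respect to the Frobenius trace form $\epsilon$: for $\dif_- = -\tfrac{\partial}{\partial x}$ it is the identity $-i \equiv -(p-i) \bmod p$ when $i+j=p$; for the twisted $\dif_0$ acting by $x^i\mapsto (p-1-2i)x^i$ it is $p-1-2i \equiv -(p-1-2j) \bmod p$ when $i+j=p-1$; and the case of $\dif_+$ is entirely similar (the green-dot twist is exactly what absorbs the quantum-degree shift and makes the three operators skew-adjoint on the nose). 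Skew-adjointness of the action is precisely the condition that the transpose isomorphism $\mC(D^!)\cong \mC(D)^{*}$ carry the action on the left to the dual action on the right.

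Finally, since vector-space duality over $\Bbbk$ is exact and commutes with cohomology, passing to homology yields the desired isomorphism of graded $\mathfrak{sl}(2)$-modules $\mH(D^!)\cong \mH(D)^{*}$. As consistency checks: the unknot is its own mirror and indeed $\nabla(p-1)^{*}\cong\nabla(2p-2-(p-1))=\nabla(p-1)$; and more generally, by Proposition \ref{prop-dual-Verma-filtration} both $\mH(D^!)$ and $\mH(D)^{*}$ carry $\nabla$-filtrations, whose subquotients $\nabla(\lambda)$ and $\nabla(2p-2-\lambda)$ are matched after dualizing and reversing the order.

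The main obstacle is the second step: carefully matching the twisted $\mathfrak{sl}(2)$-action of \cite{QRSW3} with the foam reflection. Concretely one must verify that reflecting a green-dotted foam implements exactly the antipode of $U(\mathfrak{sl}(2))$ on state spaces --- equivalently, that the adjoint of each $\dif_\bullet$ under the foam pairing is $-\dif_\bullet$ exactly, with the grading twist absorbing any would-be sign or degree discrepancy --- rather than an operator that differs by a shift. Keeping track of the crossing and framing conventions through the reflection, so that the result is genuinely an unframed-link invariant with no leftover $q$-shift, is the other place where care is needed.
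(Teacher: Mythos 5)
Your route is genuinely different from the paper's. You work at the chain level: reflect the diagram, use the foam-evaluation pairing to identify $\mC(D^!)$ with the dual complex $\mC(D)^*$, and then verify that the twisted operators $\dif_-,\dif_0,\dif_+$ are skew-adjoint for that pairing, so that the identification intertwines the action with the dual action of \eqref{eqn-dual-rep}. The paper instead argues topologically: bending the product cobordism $D\times[0,1]\subset\R^4$ into the shapes $D\times\cup$ and $D\times\cap$ produces a copairing $\Bbbk\to\mH(D^!)\otimes\mH(D)$ and a pairing $\mH(D)\otimes\mH(D^!)\to\Bbbk$; because these cobordisms are disjoint unions of undecorated (tangled) cylinders, the induced maps commute with the $\mathfrak{sl}(2)$-action (the same mechanism used for the projection and inclusion foams in the $\ovoid_k$ example), and the zig-zag identities make the pairing perfect, hence an isomorphism onto the dual module. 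The two arguments buy different things: yours is self-contained linear algebra and makes the skew-adjointness explicit, but the step you correctly flag as the main obstacle --- checking that reflection of an arbitrary green-dotted foam implements exactly $-\dif_\bullet$ on state spaces, not just on the circle's state space $\Bbbk[x]/(x^p)$ where you carry out the computation --- is precisely the hard local verification, and for a general web the operators involve sums of dotted facets with twists, so the circle case alone does not suffice. The paper's cobordism argument sidesteps that verification by invoking functoriality of the action under foam-induced maps together with the triviality of the action on undecorated cylinders, both already established in \cite{QRSW3}. If you want to complete your version, the cleanest repair is to observe that your chain-level duality map is itself induced by the bent-cylinder foams, at which point equivariance follows from functoriality rather than from a facet-by-facet adjointness check.
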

\begin{proof}
      Form the tube $D\times [0,1]$ inside $\R^4$, and bend it into the shapes of $D\times \cup$ and $D\times \cap$. Since these tubes are disjoint unions of (tangled) cylinders, they induce $\mathfrak{sl}(2)$-commuting pairings and copairings between $\mH(D)$ and $\mH(D^!)$. The result follows.
\end{proof}
\begin{example}
When $L$ is the Hopf link $T_{2,2}$, as an $\mathfrak{sl}(2)$-module, the homology of $L$ is given by
\[
\mH(T_{2,2}) \cong
q^{-2}\nabla(0) \oplus t^{-2}( (\nabla(2t_1-3) \otimes \nabla(2t_2-3)) / \nabla(-2))
\]
where the second summand is in cohomological degree $-2$, indicated by the cohomological shift $t^{-2}$.
Here, $\nabla(-2)$ is embedded in $\nabla(2t_1-3) \otimes \nabla(2t_2-3)$
by mapping $v_k$ to $\sum_{i+j=p-1+k} v_i \otimes v_j$.
This computation is done as in \cite{Roz} for the $N=2$ case, but in characteristic $p$, and setting the equivariant parameters to zero.
The chain complex simplifies as in \cite[Proposition 5.1]{Roz}.
In cohomological degree zero, the space is a quotient of a space spanned by two cup foams with dots.  The $\mathfrak{sl}(2)$-action is computed using the formulas in \cite[Section 3.2]{QRSW3}.
In cohomological degree -2, the space is a quotient of the space spanned by a half-theta foam with dots on the thin facets.  The dimension of the space is $p(p-1)$ (which is the decategorified evaluation of the theta web).
The $\mathfrak{sl}(2)$-action is again computed using the formulas in \cite[Section 3.2]{QRSW3}.  Decomposing the spanning foams into basic foams, one has an unzip foam and two cup foams.

\end{example}
\begin{example}
    More generally, if $L=T_{2,n}$ is a $(2,n)$-torus link, one could compute the homology using \cite{Roz} and \cite{KRWitt}. 

    If $n$ is odd, then
    \begin{equation}
    \mH(L)\cong q^{-n} \nabla(0) \oplus
    \bigoplus_{j=1}^{\frac{n-1}{2}} \left( t^{-2j} q^{-n+4j-1} \nabla(2t_1-2j+p-1) \oplus
    t^{-2j-1} q^{-n+4j+1} \nabla(2-2j) 
    \right)
    \ .
    \end{equation}
      If $n$ is even, then
     \begin{align}
    \mH(L)\cong q^{-n} \nabla(0) &\oplus
    \bigoplus_{j=1}^{\frac{n-1}{2}} \left( t^{-2j} q^{-n+4j-1} \nabla(2t_1-2j+p-1) \oplus
    t^{-2j-1} q^{-n+4j+1} \nabla(2-2j) 
    \right)\nonumber \\
    &\oplus
    t^{-n} (\nabla(2t_1-n+p-1) \otimes \nabla(2t_2-n+p-1)) / \nabla(2-2n)
    \ .
    \end{align}

    Note that the homology is different than in the characteristic 0 calculation of \cite[Section 10.3]{CautisClasp}.
\end{example}

\subsection{Some topological applications}

The state spaces of webs and links enjoy certain parity-unimodality properties as a consequence of the $\mathfrak{sl}_2$-symmetry.
Recall that we choose the parameters $t_1, t_2$ in the $\mathfrak{sl}_2$-action to satisfy $t_1+t_2=1$ so that the $\dif_0$ operator acts as negative of the degree operator on foams.

For any web or link $D$ diagram, consider its state space $\mH_{j,i}(D)$ in a particular cohomological degree $j$ and  $q$-degree $i$.  

The bigraded dimension (Poincar\'{e} polynomial) of the $\mathfrak{gl}(p)$-homology theory
\[
\gdim \mH(D) = \sum_{i, j \in \mathbb{Z}}t^j \dim_\Bbbk \mH_{j,i}(D) q^i \in \Z[t^{\pm 1},q^{\pm 1}].
\]
is an invariant of $D$, which specializes to the $\mathfrak{gl}(p)$-polynomial of $D$ when $t=-1$. Now we consider this polynomial in the quotient ring $\Z[t^{\pm 1},q]/(q^{2p}-1)$, and denote the image polynomial by $\gdim_p \mH(D)$. In effect, we have
\begin{align}
\gdim_p \mH(D) & = \sum_{ j \in \mathbb{Z}}\sum_{i=0}^{2p-1} t^j \dim_\Bbbk \left(\bigoplus_{k\in \mathbb{Z}} \mH_{j,i+2kp}(D)\right) q^{i} \nonumber \\
& = \sum_{j\in \Z} \sum_{i=0}^{2p-1} \lambda_{j,i}t^jq^i \label{eq:gdim}
\end{align}

\begin{thm}
Let $D$ be  a link or web diagram which has an edge of thickness 1.
   For any $j\in \Z$, the coefficients of $ \gdim_p \mH(D)$ in \eqref{eq:gdim} satisfy a parity unimodality property:
        \[
    \lambda_{j,0} = \lambda_{j,2} = \ldots = \lambda_{j,2p-2}\ , \quad \quad
    \lambda_{j,1} = \lambda_{j,3} = \ldots = \lambda_{j,2p-1} \ .
    \]
\end{thm}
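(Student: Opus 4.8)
The plan is to combine the dual baby Verma filtration of $\mH(D)$ (Proposition~\ref{prop-dual-Verma-filtration}) with an elementary residue count modulo $2p$ for the graded dimension of a single dual baby Verma module. The hypothesis that $D$ has an edge of thickness one is exactly what is needed to run the base-point construction underlying Proposition~\ref{prop-unreduced-reduced} (for a link every edge qualifies), so I would first note that, choosing the base point $b$ on a thickness-one edge, merging a small thin unknot there makes $\mC(D)$ and $\mH(D)$ into $p$-DG modules over $A=\Bbbk[x_b]/(x_b^p)$, and Propositions~\ref{prop-unreduced-reduced} and~\ref{prop-dual-Verma-filtration} apply verbatim. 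Moreover the filtration produced in the proof of Proposition~\ref{prop-dual-Verma-filtration} is a filtration of graded $\ul$-modules: it is obtained by tensoring $A$ with a filtration of the finite-dimensional graded $\mathfrak{b}_+$-module $x_b^{p-1}\mC(D)$ by one-dimensional graded subquotients, and such a graded flag always exists (iteratively split off a $\dif_0$-eigenline in the top-degree homogeneous component, on which $\dif_+$ necessarily acts by zero). Thus, fixing a cohomological degree $j$, the graded $\ul$-module $\mH_{j,*}(D)$ carries a finite filtration whose associated graded is a direct sum $\bigoplus_{s}\nabla(\mu_s)$ of graded dual baby Verma modules, $\mu_s\in\Z$.

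Next I would compute the reduction modulo $q^{2p}-1$ of the graded dimension of one $\nabla(\mu)$. Since $t_1+t_2=1$, the operator $\dif_0$ acts as the negative of the $q$-degree, so the basis vector $v_i$ of $\nabla(\mu)$, a $\dif_0$-eigenvector of eigenvalue $\mu-2i$, sits in $q$-degree $2i-\mu$; hence $\gdim\nabla(\mu)=q^{-\mu}\sum_{i=0}^{p-1}q^{2i}=q^{-\mu}(1+q^2+\cdots+q^{2p-2})$. The $p$ exponents $2i-\mu$, $i=0,\dots,p-1$, are pairwise distinct modulo $2p$ and all congruent to $\mu$ modulo $2$, hence they are precisely the $p$ residues of that parity. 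Consequently, in $\Z[q]/(q^{2p}-1)$,
\[
\gdim\nabla(\mu)\equiv
\begin{cases}
1+q^2+\cdots+q^{2p-2}, & \mu\text{ even},\\
q+q^3+\cdots+q^{2p-1}, & \mu\text{ odd}.
\end{cases}
\]

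Finally, since graded dimension is additive along filtrations, summing the formula above over the subquotients $\nabla(\mu_s)$ occurring in $\mH_{j,*}(D)$ gives
\[
\gdim_p\mH_{j,*}(D)=a_j\,(1+q^2+\cdots+q^{2p-2})+b_j\,(q+q^3+\cdots+q^{2p-1}),
\]
where $a_j$ (respectively $b_j$) counts the subquotients with $\mu_s$ even (respectively odd). Reading off coefficients yields $\lambda_{j,0}=\lambda_{j,2}=\cdots=\lambda_{j,2p-2}=a_j$ and $\lambda_{j,1}=\lambda_{j,3}=\cdots=\lambda_{j,2p-1}=b_j$, which is the assertion. (As an alternative one can bypass the $\mathfrak{sl}_2$-filtration entirely and use only that $\mH(D)$ is free as a graded $A$-module by Proposition~\ref{prop-unreduced-reduced}, since $\gdim A=1+q^2+\cdots+q^{2p-2}$ behaves the same way under multiplication by even versus odd powers of $q$ modulo $q^{2p}-1$.) I expect no serious obstacle here; the only point requiring a little care is the grading bookkeeping—confirming that the filtration of Proposition~\ref{prop-dual-Verma-filtration} genuinely respects the internal $q$-grading and tracking how shifts change the parity—after which everything reduces to the residue count above.
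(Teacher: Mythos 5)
Your argument is correct and is essentially the paper's own proof: the paper also deduces the statement directly from Proposition \ref{prop-dual-Verma-filtration}, observing that each dual baby Verma subquotient contributes one-dimensional weight spaces in all $p$ residues modulo $2p$ of a fixed parity. Your write-up simply makes the residue count and the grading bookkeeping explicit (and your parenthetical alternative via freeness over $A$ is a valid shortcut of the same idea).
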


\begin{proof}
This follows directly from Proposition \ref{prop-dual-Verma-filtration}.  For such a diagram $D$, the homology of $D$ has a dual baby Verma filtration, whose non-zero weight spaces are of equal dimension one.
\end{proof}

The rigidity of the Steinberg module provides an obstruction criterion to the sliceness of a knot.

\begin{thm} \label{thm:stein}
    Let $K$ be an oriented knot. If $K$ is slice, then the $\mathfrak{gl}(p)$-homology of $K$ contains a Steinberg summand in cohomological degree 0.
\end{thm}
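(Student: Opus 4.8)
The plan is to realise a copy of the Steinberg module $\nabla(p-1)=L(p-1)$ inside $\mH^{0}(K)$ as the image of a cobordism map coming from a slice disk, and then to use that $\nabla(p-1)$, being projective--injective as a graded $\ul$-module, splits off as a direct summand.

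Suppose $K=\partial\Sigma$ for a smoothly and properly embedded disk $\Sigma\subset B^{4}$. Deleting a small open ball around an interior point of $\Sigma$ produces a connected genus-$0$ cobordism (an annulus) $W\subset S^{3}\times[0,1]$ from the unknot $\ovoid$ to $K$; write $W^{\dagger}$ for the reversed cobordism from $K$ to $\ovoid$. By the functoriality of $\mathfrak{gl}(p)$-foam cobordisms and of the $\mathfrak{sl}(2)$-action on foam state spaces \cite{QRSW2,QRSW3}, these induce graded $\mathfrak{sl}(2)$-equivariant maps
\[
f=\mH(W)\colon \mH(\ovoid)\longrightarrow \mH(K),\qquad
g=\mH(W^{\dagger})\colon \mH(K)\longrightarrow \mH(\ovoid),
\]
each of cohomological degree $0$, since cobordism maps preserve the homological grading. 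By Example \ref{eg-unknot}, $\mH(\ovoid)\cong\nabla(p-1)$ is the Steinberg module, concentrated in cohomological degree $0$.

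Now consider the composite $g\circ f=\mH(W^{\dagger}\circ W)$. The cobordism $W^{\dagger}\circ W$ is the closed-up disk $\widehat{\Sigma}:=\Sigma\cup_{K}\bar\Sigma$ --- a $2$-sphere in $S^{4}$ --- with two small disks removed, i.e.\ a connected genus-$0$ cobordism $\ovoid\to\ovoid$. As $\nabla(p-1)$ is absolutely simple with one-dimensional weight spaces, Schur's lemma forces the degree-$0$ graded $\mathfrak{sl}(2)$-endomorphism $g\circ f$ to be $c\cdot\mathrm{id}$ for some scalar $c\in\Bbbk$; I expect the verification that $c\neq0$ to be the main obstacle. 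To establish it I would forget the $\mathfrak{sl}(2)$-structure and argue with the underlying $\mathfrak{gl}(p)$-cobordism maps, which are defined integrally: lifting to characteristic $0$ and invoking the $\mathfrak{sl}(p)$ analogue of Rasmussen's lemma (via Gornik's deformation, as in the work of Lobb and Wu), a connected genus-$0$ cobordism out of the unknot carries a canonical generator to a canonical generator up to sign --- the general statement carries an additional factor which is a power of $p$ depending on the genus, and is therefore trivial here. Hence $g\circ f$ acts on this generator by $\pm1$ rationally, so by a $p$-unit integrally, so nontrivially over $\Bbbk$. Thus $c\neq0$, and in particular $f$ is injective.

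Finally, since $\nabla(p-1)$ is simple, $f$ embeds a graded shift $q^{d}\nabla(p-1)$ as a graded $\ul$-submodule of $\mH(K)$; because $f$ preserves the cohomological grading and $\mH(\ovoid)$ sits in cohomological degree $0$, this submodule lies in $\mH^{0}(K)$. But $\nabla(p-1)=L(1\cdot p-1)$, so it and all its graded shifts are projective--injective as graded $\ul$-modules, as recalled before Proposition \ref{prop-projective-criterion}. Injectivity then splits the inclusion $q^{d}\nabla(p-1)\hookrightarrow\mH^{0}(K)$, exhibiting a Steinberg summand of $\mH^{0}(K)$, hence of $\mH(K)$, concentrated in cohomological degree $0$. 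Equivalently, one can run the dual version: the injectivity of $f$ is the same, via $\mH(K)\cong\mH(K^{!})^{*}$ together with $\nabla(p-1)^{*}\cong\nabla(p-1)$, as the existence of a surjection $\mH^{0}(K)\twoheadrightarrow\nabla(p-1)$, which splits by projectivity of the Steinberg module.
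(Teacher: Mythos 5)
Your argument is essentially the paper's: puncture the slice disk to get an annular cobordism from the unknot, observe the induced map is $\mathfrak{sl}(2)$-equivariant and nonzero (the paper also defers this to the Lee/Gornik-deformation argument you sketch), deduce injectivity from simplicity of the Steinberg module, and split off the image using its graded projectivity--injectivity. The proposal is correct and matches the paper's proof, merely spelling out the nonvanishing step in more detail.
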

\begin{proof}
An embedded cylinder cobordism in $\R^4$ from the unknot to $K$ defines an $\mathfrak{sl}(2)$-commuting homomorphism from the $\mathfrak{gl}(p)$-homology of the unknot to that of $K$. As for the usual Khovanov homology and its Lee deformations, one can verify that this map is nonzero. Then, as seen in Example \ref{eg-unknot}, the homology of the unknot is the Steinberg module, which is both simple, projective and injective as a graded $\mathfrak{sl}(2)$-module.  It follows that the homomorphism is injective, and the image of the unknot homology is a direct summand of $\mH(L)$ because of graded injectivity.
\end{proof}
 It is a natural question to ask about the converse: Is the condition in Theorem \ref{thm:stein} an ``if and only if'' condition? 

Next, motivated by Lipshitz--Sarkar \cite{LipSarSplit} and Wang \cite{WangJ2}, we show that the number of copies of $\mathfrak{sl}(2)$ acting on the $\mathfrak{gl}(p)$-homology detects the splitness of a link. Suppose $L=L_1\sqcup L_2$ is the split disjoint union of two links $L_1$ and $L_2$. In other words,  $L_1$ and $L_2$ are separated by an embedded sphere in $\R^3$ away from $L$. Then each $\mH(L_i)$ carries an independent $\mathfrak{sl}(2)$-action, and thus $\mH(L)=\mH(L_1)\otimes \mH(L_2)$ carries an action of $\mathfrak{sl}(2)\times \mathfrak{sl}(2)$. The earlier $\mathfrak{sl}(2)$-action on $\mH(L)$ comes from diagonally embedding $\mathfrak{sl}(2)$ into $\mathfrak{sl}(2)\times \mathfrak{sl}(2)$. 

Now choose base points on $L_i$ so that $\mH(L_i)$ are $\Bbbk[x_i]/(x_i^p)$-modules, $i=1,2$. To differentiate the two copies of $\mathfrak{sl}(2)$ acting on $\mH(L_i)$,
we will call the Lie algebra generators $\dif_{i,s}$, where $i=1,2$ and $s\in \{0,\pm\}$. The operators satisfy
\begin{equation}\label{eqn-2-sl(2)}
\dif_{i,-}(x_j)=-\delta_{i,j}, \quad \quad  \quad \dif_{i,0}(x_j)=-\delta_{i,j} 2x_j, \quad \quad \quad \dif_{i,+}(x_j)=\delta_{i,j} x_j^2,\quad \quad \quad i,j\in \{1,2\}.
\end{equation}
The converse statement also holds.
 
\begin{thm}
    Consider a link $L$ where $x_i$ are base points contained in two distinct components $L_i$, $i=1,2$. Then there exists an $\mathfrak{sl}_2\times \mathfrak{sl}_2$-action on the $\mathfrak{gl}(p)$-Khovanov--Rozansky homology of $L$ satisfying \eqref{eqn-2-sl(2)} if and only if $L_1$ and $L_2$ are separated by an embedded sphere in $\R^3$ away from $L$.
\end{thm}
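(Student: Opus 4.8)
The plan is to prove the two implications separately, with the ``only if'' direction being essentially the content already assembled in the preceding paragraph, and the ``if'' direction being the new ingredient.

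For the forward implication, suppose $L = L_1 \sqcup L_2$ is split, separated by an embedded sphere $S \subset \R^3$. Then $\mH(L) \cong \mH(L_1) \otimes \mH(L_2)$ as bigraded vector spaces, and choosing base points $x_i \in L_i$, each tensor factor $\mH(L_i)$ carries its own $\mathfrak{sl}(2)$-action generated by operators $\dif_{i,s}$, $s \in \{0,\pm\}$, built from the local picture at $x_i$ exactly as in \cite{QRSW3}. These two actions commute because they are supported in disjoint regions of the diagram (one can realize them by local foam modifications near $x_1$ and $x_2$ respectively, which are separated by the splitting sphere). The verification that $\dif_{i,\pm}$ and $\dif_{i,0}$ act on the base point variables $x_j$ by the formulas \eqref{eqn-2-sl(2)} is the same local computation used to define $\dif_-$, $\dif_0$, $\dif_+$ on $A = \Bbbk[x]/(x^p)$ in the single-base-point case: the operator indexed by $i$ only ``sees'' the variable $x_i$, hence the Kronecker delta. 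This gives the desired $\mathfrak{sl}_2 \times \mathfrak{sl}_2$-action.

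For the converse, suppose we are given an $\mathfrak{sl}_2 \times \mathfrak{sl}_2$-action on $\mH(L)$ satisfying \eqref{eqn-2-sl(2)}. The key point is that the operators $\dif_{1,-}$ and $\dif_{2,-}$ are each $p$-differentials on $\mH(L)$, with $\dif_{1,-}(x_1) = -1$, $\dif_{2,-}(x_2) = -1$, while $\dif_{1,-}(x_2) = 0 = \dif_{2,-}(x_1)$. Exactly as in Corollary \ref{cor-acyclicity} and the proof of the Shumakovitch--Wang theorem, having such a $p$-differential $\dif_{i,-}$ together with the action of $x_i$ makes $\mH(L)$ a module over the matrix algebra $B_i = (\Bbbk[x_i]/(x_i^p)) \# H \cong \M(p,\Bbbk)$, where the two $B_i$ commute with one another. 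So $\mH(L)$ is a module over $B_1 \otimes B_2 \cong \M(p^2, \Bbbk)$, whence $\dim_\Bbbk \mH(L)$ is divisible by $p^2$. On the other hand, the decategorified $\mathfrak{gl}(p)$-invariant detects splitness via a theorem of Wang \cite{WangJ2} (in the spirit of Lipshitz--Sarkar \cite{LipSarSplit}): the $p$-divisibility pattern of $\dim \mH(L)$, or more precisely of the graded dimension reduced modulo a base point variable at each component, forces $L$ to be split. Concretely, one reduces $\mH(L)$ along $x_1$ using $B_1$-Morita reduction to get $\bar{\mH}_1(L)$, which still carries the residual $\mathfrak{sl}(2)$-action of the $\dif_{2,s}$ and hence is again $B_2$-acyclic; iterating and invoking \cite{WangJ2} yields that the reduced homology splits as a tensor product, which by the detection result forces $L$ to be split.

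The main obstacle is the converse direction, and specifically pinning down exactly which splitness-detection statement from \cite{WangJ2} is being invoked and verifying that the $\mathfrak{sl}_2 \times \mathfrak{sl}_2$-hypothesis supplies precisely its input. The $p^2$-divisibility of $\dim_\Bbbk \mH(L)$ is immediate from the matrix-algebra argument above, but $p^2$-divisibility alone is not obviously equivalent to splitness — one needs the finer statement that the two independent Morita reductions are ``compatible'', i.e. that $\mH(L)$ is genuinely a free module over $A \otimes A = \Bbbk[x_1,x_2]/(x_1^p,x_2^p)$ in a way witnessed by commuting $\mathfrak{sl}(2)$-actions, and then feed this into Wang's criterion. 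Making that last step precise — translating ``two commuting infinitesimal symmetries'' into the hypothesis of the cited theorem — is where the real work lies; the rest is a repackaging of Corollary \ref{cor-acyclicity} and Proposition \ref{prop-unreduced-reduced}.
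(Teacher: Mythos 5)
Your overall architecture matches the paper's: the forward direction is the easy geometric one, and the converse is routed through Wang's detection theorem, whose precise content is exactly what you guessed --- $\mH(L)$ is free as a module over $\Bbbk[x_1,x_2]/(x_1^p,x_2^p)$ if and only if $L_1$ and $L_2$ are separated by an embedded sphere. But your converse has a genuine gap precisely at the step you yourself flag as ``where the real work lies,'' and the detour you take instead (divisibility of $\dim_\Bbbk \mH(L)$ by $p^2$, followed by an iterated-Morita-reduction sketch) does not close it: as you note, $p^2$-divisibility is far weaker than freeness over $A\otimes A$, and the iterated reduction as written never produces the bimodule freeness statement that Wang's criterion requires.

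The missing step is, however, a one-line consequence of what you already set up. You correctly observe that the hypothesis makes $\mH(L)$ a module over $B_1\otimes B_2=(A_1\# H)\otimes(A_2\# H)\cong \M(p,\Bbbk)\otimes\M(p,\Bbbk)\cong\M(p^2,\Bbbk)$. Since this is a matrix algebra, every module over it is a direct sum of copies of the unique simple (column) module $V_1\otimes V_2$, and the restriction of $V_1\otimes V_2$ to the subalgebra $A_1\otimes A_2$ is $A_1\otimes A_2$ itself, i.e.\ free of rank one (this is Corollary \ref{cor-acyclicity} applied in each factor). Hence $\mH(L)$ is automatically free over $\Bbbk[x_1,x_2]/(x_1^p,x_2^p)$, and Wang's theorem \cite[Theorem 4.13]{WangJ2} concludes. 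So there is no ``compatibility of the two Morita reductions'' left to verify --- the tensor-product matrix algebra structure already encodes it --- and you should delete the divisibility discussion, which is a red herring.
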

\begin{proof}
    This is a consequence of Wang's theorem \cite[Theorem 4.13]{WangJ2} on $\mH(L)$ detecting splitness of links. Wang's theorem shows that, as a module over the truncated polynomials rings $\Bbbk[x_1,x_2]/(x_1^p, x_2^p)$, $\mH(L)$ is free if and only if the components $L_i$'s are separated by an embedded sphere in $\R^3$ away from $L$.

    Clearly, if the $L_i$'s are separated by an embedded sphere in $\R^3$ away from $L$, then there is an action of the Lie algebra $\mathfrak{sl}_2\times \mathfrak{sl}_2$ on the homology $\mH(L)$, where the two factors act on the homology of the link components inside and outside of the embedded sphere respectively.

    Conversely, suppose there is an $\mathfrak{sl}_2\times \mathfrak{sl}_2$-action on $\mH(L)$ such that
    \[
    \dif_{i,-}(x_j)=\delta_{i,j}, \quad \quad i,j=1,2. 
    \]
    Then $\mH(L)$ is a module algebra over 
    \[
    (A_1\otimes A_2)\# \dfrac{\Bbbk[\dif_{1,-},\dif_{2,-}]}{(\dif_{1,-}^p,\dif_{2,-}^p)} \cong \mathrm{M}_p(\Bbbk)\otimes \mathrm{M}_p(\Bbbk).
    \]
    As modules over the last tensor product matrix algebra are always free over $\Bbbk[x_1,x_2]/(x_1^p, x_2^p)$, the result follows from Wang's theorem.
\end{proof}

\addcontentsline{toc}{section}{References}

\bibliographystyle{alphaurl}
\bibliography{extracted}

\noindent Y.~Q.: { \sl \small Department of Mathematics, University of Virginia, Charlottesville, VA 22904, USA} \newline \noindent {\tt \small email: yq2dw@virginia.edu}

\vspace{0.1in}

\noindent L.-H.~R.:  {\sl \small Université Clermont Auvergne, LMBP,
    Campus des Cézeaux, 3 place Vasarely, TSA 60026, CS 60026, 63178
    Aubière Cedex, France}
\newline \noindent {\tt \small email: louis-hadrien.robert@uca.fr  }

\vspace{0.1in}

\noindent J.~S.:  {\sl \small Department of Mathematics, CUNY Medgar Evers, Brooklyn, NY, 11225, USA}\newline \noindent {\tt \small email: jsussan@mec.cuny.edu \newline 
\sl \small Mathematics Program, The Graduate Center, CUNY, New York, NY, 10016, USA}

\vspace{0.1in}

\noindent E.~W.: { \sl \small Univ Paris Diderot, IMJ-PRG, UMR 7586 CNRS, F-75013, Paris, France} \newline \noindent {\tt \small email: wagner@imj-prg.fr}

\end{document}